\theoremstyle{plain}
\newtheorem{thm}{Theorem}
\newtheorem{cor}{Corollary}
\newtheorem{lem}{Lemma}
\theoremstyle{remark}
\newtheorem{cond}{Condition}
\newtheorem*{lp}{Local Prior Condition}
\newtheorem*{gp}{Global Prior Condition}
\newcommand{\RR}{\mathbb{R}}
\newcommand{\E}{\mathsf{E}}
\newcommand{\prob}{\mathsf{P}}
\newcommand{\eps}{\varepsilon}
\newcommand{\nm}{\mathsf{N}}
\renewcommand{\S}{\mathcal{S}}
\begin{document}

\begin{frontmatter}
\title{Empirical Bayes inference in sparse high-dimensional generalized linear models}
%\title{A sample article title with some additional note\thanksref{t1}}
\runtitle{Empirical Bayes inference in sparse high-dim GLM}
%\thankstext{T1}{A sample additional note to the title.}

\begin{aug}
%%%%%%%%%%%%%%%%%%%%%%%%%%%%%%%%%%%%%%%%%%%%%%%
%% ORCID can be inserted by command:         %%
%% \orcid{0000-0000-0000-0000}               %%
%%%%%%%%%%%%%%%%%%%%%%%%%%%%%%%%%%%%%%%%%%%%%%%
\author{\fnms{Yiqi} \snm{Tang}\ead[label=e1]{ytang22@ncsu.edu}}
\and
\author{\fnms{Ryan} \snm{Martin}\ead[label=e2]{rgmarti3@ncsu.edu}}

\address{Department of Statistics,
North Carolina State University, Raleigh, US
\printead{e1,e2}}

\runauthor{Y. Tang and R. Martin}

\end{aug}

\begin{abstract}
High-dimensional linear models have been widely studied, but the developments in high-dimensional generalized linear models, or GLMs, have been slower.  In this paper, we propose an empirical or data-driven prior leading to an empirical Bayes posterior distribution which can be used for estimation of and inference on the coefficient vector in a high-dimensional GLM, as well as for variable selection.  We prove that our proposed posterior concentrates around the true/sparse coefficient vector at the optimal rate, provide conditions under which the posterior can achieve variable selection consistency, and prove a Bernstein--von Mises theorem that implies asymptotically valid uncertainty quantification.  Computation of the proposed empirical Bayes posterior is simple and efficient, and is shown to perform well in simulations compared to existing Bayesian and non-Bayesian methods in terms of estimation and variable selection. 
\end{abstract}

\begin{keyword}[class=MSC]
\kwd[Primary ]{62C12}
\kwd{62E20}
\kwd{62J12}
\end{keyword}

\begin{keyword}
\kwd{data-dependent prior}
\kwd{logistic regression}
\kwd{model selection}
\kwd{Poisson log-linear model}
\kwd{posterior asymptotics}
\end{keyword}

%\tableofcontents
\end{frontmatter}

\section{Introduction}
\label{S:intro}

Generalized linear models, or GLMs, which include normal, logistic, and Poisson regression as important special cases, are essential tools for data analysis in all quantitative fields; see, e.g., \citet{McCullaghNelder:1989} for a thorough introduction.  In modern applications, it is common for the number of predictor variables, $p$, to greatly exceed the sample size, $n$; this is the so-called ``$p \gg n$'' problem.  For example, logistic regression for presence/absence of a trait, with gene expression levels as covariates is one such problem. By now there is an enormous body of literature on the $p \gg n$ problem in the case of normal linear regression.  Popular methods include the lasso and its variants \citep{hastie.tibshirani.friedman.2009}. Bayesian efforts in the normal linear regression problem can be split into two categories: those based on shrinkage priors such as the {\em horseshoe} \citep{carvalho.polson.scott.2010, bhadra2019lasso, van2017uncertainty, bhadra.hsplus.2017, bhadra2016} and those based on spike-and-slab mixture priors \citep{castillo.vaart.2012, castillo.schmidt.vaart.reg, belitser2020empirical, george.mccullogh.1993}.  On the non-Bayesian side, a number of these methods have been extended from the normal linear model to other GLMs, e.g., the R package {\tt glmnet} \citep{glmnet} offers a comprehensive lasso-based toolkit, but the Bayesian developments in this direction are still limited; the methods tend to be tailored to logistic regression \citep{cao2020variable, narisetty2018skinny} and the theory focuses mostly on variable selection; the one exception, \citet{jeong2021posterior} gave results on posterior concentration rates in high-dimensional GLMs but did not address model selection or implementations of the Bayesian solutions they studied.  The main goal of the present paper is to offer a Bayesian (or at least Bayesian-like) solution to the high-dimensional GLM problem, having both strong theoretical support and an efficient numerical implementation that is not tailored to any one specific GLM. 

A challenge for Bayesian inference in high-dimensional models is that the priors for which posterior computations are relatively simple generally do not produce good theoretical posterior concentration properties and, vice versa, the priors with theoretical justification make posterior computations difficult and expensive.  For example, in normal linear regression, a computationally simple prior is a mixture of conjugate mean-zero normal priors on the coefficients, each component corresponding to a subset of active variables, but it has been shown \citep{castillo.vaart.2012} that the thin tails of the normal can lead to sub-optimal theoretical properties.  A theoretically better choice of prior is one with heavier, Laplace-type tails, but this added complexity translates to higher computational cost.  To overcome this obstacle, \cite{martin.mess.walker.eb} proposed the idea of using the data to properly center the prior.  The motivation is that the tails of the prior should not  matter if the prior is strategically centered, so then the computationally simpler conjugate normal priors could still be used.  Centering the model-specific conjugate normal priors on the corresponding least-squares estimators makes the approach {\em empirical Bayes}, in a certain sense, and the previous authors show that the corresponding empirical Bayes posterior has optimal asymptotic concentration properties and has strong empirical performance compared to existing Bayesian and non-Bayesian methods.  In other words, the double-use of data---in the prior and in the likelihood---does not hurt the method's performance in any way; in fact, one could argue that the double-use of data actually helps.  Beyond the normal linear model \citep{martin.mess.walker.eb, martin.tang.jmlr}, there is strong general theory in \cite{martin2019data} and promising results in applications, including \cite{ebmono, liu2019empirical, ebpiece}. 

The goal here is to develop the aforementioned empirical Bayes strategy for the case of high-dimensional GLMs. In Section~\ref{S:background}, we introduce the set up of the GLM problem and review the empirical Bayes approach for linear regression. In Section~\ref{S:eb}, we present our empirical Bayes GLM, including the particular choice of data-driven prior, the corresponding empirical Bayes posterior, and our proposed computational strategy.  The key challenge in the present GLM case compared to previous efforts in the linear model setting is that the models are sufficiently complicated that there is no conjugacy and, therefore, no posterior computations can be done in closed form.  Here the ``informativeness'' of the data-driven prior allows for some simple and accurate approximations.  In Section~\ref{S:theory}, we offer theoretical support for our proposed solution.  In particular, we have two basic kinds of posterior concentration results: those for the GLM coefficients, which are relevant to estimation, and those for the so-called configuration, or active set, which are relevant to variable selection.  In the former case, we give sufficient conditions for the posterior to concentrate around the true (sparse) coefficient vector at rates equivalent to those established in, e.g., \cite{jeong2021posterior}, which agree with the minimiax optimal rates in the linear model setting.  In the latter case, we give sufficient conditions (e.g., on the size of the smallest non-zero coefficient), comparable to those in \cite{narisetty2018skinny}, that ensure our marginal posterior for the configuration will concentrate on a set that contains exactly the true set of active variables.  While the results we obtain are similar to those found elsewhere in the literature, it is important to emphasize that, to our knowledge, there is no single Bayesian method for which both of these kinds of posterior concentration properties have been established.  And, again, the lack of conjugacy and closed-form expressions for (parts of) the posterior distribution creates some challenges compared to the linear model case, so some relatively new proof techniques---in-probability versus in-expectation bounds---are needed compared to \citet{martin.mess.walker.eb} and \citet{martin2019data}.  Beyond posterior concentration, we offer a Bernstein--von Mises theorem, comparable to that for linear models in \citet{castillo.schmidt.vaart.reg}, which establishes a large-sample Gaussian approximation of the posterior and, among other things, implies that the marginal posterior uncertainty quantification (e.g., credible intervals) are asymptotically valid. Section~\ref{S:examples} investigates the numerical performance of the proposed method in terms of estimation and variable selection in logistic and Poisson regression compared to existing methods. Finally, some concluding remarks are given in Section~\ref{S:discuss}; proofs are presented in the Appendix.

\section{Setup and background}
\label{S:background}

\subsection{Problem setup}

Suppose $y_1,\ldots,y_n$ are independent, where $y_i$ has density/mass function 
\[ f_{\eta_i}(y_i) \propto \exp\{y_i \eta_i - b(\eta_i)\}, \quad i=1,\ldots,n, \]
indexed by the real-valued natural parameters $\eta_1,\ldots,\eta_n$, where $b$ is a known, strictly convex function, i.e., $\ddot b(\eta) > 0$ for all $\eta$.  The interpretation of $b$ is that the expected value and variance of $y_i$ are $\dot b(\eta_i)$ and $\ddot b(\eta_i)$, respectively.  If the response $y_i$ has an associated vector of predictor variables $x_i \in \RR^p$, for $i=1,\ldots,n$, then introduce a coefficient vector $\theta \in \RR^p$ into the model via the relationship 
\[ (h \circ \dot b)(\eta_i) = x_i^\top \theta, \quad i=1,\ldots,n, \]
where $h$ is a given bijection called the {\em link function}.  The ``canonical'' link is $h = \dot b^{-1}$ and, in this case, the above simplifies to $\eta_i = x_i^\top \theta$.  But there are cases when non-canonical link functions $h$ are used and our theory here allows for this.  For example, in the Bernoulli data case, our results apply to both logistic regression (canonical link) and probit regression (non-canonical link).  

Write $f_\theta(y_i \mid x_i)$ for the density/mass function determined by the parameter vector $\theta$.  Then the likelihood and log-likelihood functions are, respectively, 
\[ L_n(\theta) = \prod_{i=1}^n f_\theta(y_i \mid x_i) \quad \text{and} \quad \ell_n(\theta) = \log L_n(\theta). \]
To ease the notation, set $\xi(\eta) = (h \circ \dot b)(\eta)$; if $h$ is the canonical link, then $\xi$ is the identity mapping.  Then the maximum likelihood estimator (MLE) $\hat{\theta}$ is the solution to the equation 
\begin{equation}
\label{eq:mle}
\dot\ell_n(\hat\theta) = 0 \iff \{y - \dot b(X\hat\theta)\}^\top \text{diag}\{ \dot\xi(X\theta) \} X = 0, 
\end{equation}
where $\text{diag}(\cdot)$ denotes the diagonal matrix determined by its vector argument, and $\dot\xi(X\theta)$ is the application of $\dot\xi$ to each entry in the vector $X\theta$; if $h$ is the canonical link, then $\dot\xi \equiv 1$.  The negative second derivative of the log-likelihood function is a matrix 
\[ J_n(\theta) = -\ddot \ell_n(\theta) = X^\top W(\theta) X, \]
where $W(\theta)$ is a diagonal matrix with entries 
\[ W_{ii}(\theta) = w(x_i^\top \theta) = \dot u(x_i^\top \theta) \dot\xi(x_i^\top \theta), \quad i=1,\ldots,n, \]
where $u = h^{-1}$ is the inverse link function.  When $h$ is the canonical link, we get $W_{ii}(\theta) = \ddot b(x_i^\top \theta)$. The observed Fisher information matrix is $J_n(\hat\theta) = X^\top W(\hat\theta) X$, the negative Hessian of the log-likelihood function evaluated at $\hat\theta$, which is positive definite.  For example, in binary regression with the canonical logit link, the $W$ matrix has diagonal entries 
\[ W_{ii}(\theta) = \frac{\exp(x_i^\top \theta)}{\{1+\exp(x_i^\top \theta)\}^2}, \quad i=1,\ldots,n, \]
which is bounded as a function of $\theta$. Similarly, in Poisson regression with the canonical log link, the $W$ matrix has diagonal entries
\[ W_{ii}(\theta) = \exp(x_i^\top \theta), \quad i=1,\ldots,n. \]

Our interest is in high-dimensional cases where the number of predictor variables $p$ exceeds the sample size $n$.  In such cases, the direct model fitting as described above cannot be done; intuitively, the data is not informative enough to reliably learn the very high-dimensional parameter $\theta$.  To side-step this obstacle, we shall assume, as is common, that the GLM is {\em sparse} in the sense that most of the $\theta$ coefficients are 0 (or at least negligible).  Then there is an ``active set'' of the predictor variables corresponding to the non-zero coefficient values, but this is unknown because $\theta$ itself is unknown.  To avoid overusing the term ``model'', here we will call the unknown active set of variables a {\em configuration} and denote it generically by $S$.  Since the configuration is unknown, it makes sense to decompose the unknown $\theta$ as $(S,\theta_S)$, where $S$ is a set of indices that corresponds to the ``active" coefficients in $\theta$ and $\theta_S$ is the vector of coefficients that correspond to a configuration $S$. Then, the above notation can be adjusted in a natural way.  That is, the likelihood and log-likelihood functions can be written as $L_n(S,\theta_S)$ and $\ell_n(S,\theta_S)$, respectively, the configuration-specific MLE is $\hat\theta_S$, the observed Fisher information is $J_n(S,\hat\theta_S)$, etc.  Throughout we will write $|S|$ for the cardinality of a configuration $S$, $\theta^\star$ for the true coefficients, and $S^\star$ for the true configuration; in some cases, we will write $s=|S|$ for the configuration size and, naturally, $s^\star = |S^\star|$ for the true sparsity level. Also, with a slight abuse of this notation, we will occasionally write $S(\theta) = \{j: \theta_j \neq 0\}$ to denote the configuration corresponding to a given coefficient vector. 

All of the results in \cite{narisetty2018skinny} and in \cite{cao2020variable} are established by focusing on configurations $S$ that are supersets of the true $S^\star$.  Since our asymptotic analysis goes beyond those in the aforementioned references, we will also need to consider configurations that are not supersets of $S^\star$, so some generalizations are in order.  The key observation is that, if $S \not\supset S^\star$, then $\hat\theta_S$ is {\em not} estimating $\theta_S^\star$.  Instead, $\hat\theta_S$ is estimating the minimizer of the Kullback--Leibler divergence which, in our case, is a solution to the equation 
\[ \{\dot b(X\theta^\star) - \dot b(X_S \theta_S)\}^\top \text{diag}\{\dot\xi(X_S\theta_S)\} X_S = 0. \]
Let $\theta_S^\dagger$ denote this solution.  Note that, first, this notation is slightly misleading, since there is no ``full vector'' $\theta^\dagger$ of which $\theta_S^\dagger$ is the $S$-specific sub-vector; instead, there is a different $|S|$-vector $\theta_S^\dagger$ for each $S$.  Second, if $S$ is a superset of $S^\star$, then $\theta_S^\dagger = \theta_S^\star$; this explains why \cite{narisetty2018skinny} do not need $\theta_S^\dagger$ in their superset-only analysis.  Lemmas~\ref{lem:Jn}--\ref{lem:mle} in Appendix~\ref{AA:likelihood} below generalize Lemmas~A1 and A3 in \cite{narisetty2018skinny} to cover the case where $\hat\theta_S$ is compared to $\theta_S^\dagger$ rather than $\theta_S^\star$.

\subsection{Empirical priors}
\label{SS:prior.background}

A very general construction of empirical or data-driven priors and the corresponding posterior concentration rate theory is given in \cite{martin2019data}.  There are elements of their general formulation used here in this application, but it is not necessary to review these for our purposes; see Appendix~\ref{AA:empirical.priors} for some of the relevant technical details.  For this reason, we focus our review here on the high-dimensional linear regression case investigated in \cite{martin.mess.walker.eb} and in \cite{martin.tang.jmlr}.

The normal linear model assumes that $y_i \sim \nm(x_i^\top \theta, \sigma^2)$, independent, for $i=1,\ldots,n$; for the discussion here, we take $\sigma$ to be known, but the case where $\sigma$ is unknown and assigned a prior has been considered in \cite{martin.tang.jmlr} and \cite{fang2023high}. As above, the idea is to reinterpret the high-dimensional coefficient vector $\theta$ as the pair $(S,\theta_S)$, the configuration and configuration-specific parameters.  Then the natural Bayesian approach to this would be to specify the prior hierarchically: first introduce a marginal prior for $S$, then a conditional prior for $\theta_S$, given $S$. 
\begin{enumerate}
\item For the marginal prior for $S$, the previous authors suggest the sparsity-encouraging prior mass function $\pi(S) \propto {p \choose |S|}^{-1} f_n(|S|)$, i.e., a rapidly decaying marginal prior mass function $f_n$ for the configuration size $|S|$ times a conditional uniform prior for $S$ of a given size $|S|$.  Here we consider the {\em complexity prior} that takes 
 \[ f_n(s) \propto p^{-\beta s}, \quad s=1,2,\ldots,s_{\text{max}}, \]
 where $s_\text{max}$ is a specified maximum complexity, which could be just the trivial choice $p$ or something smaller, such as $s_\text{max} = \text{rank}(X)$.  The hyperparameter $\beta > 0$ plays a crucial role and is discussed further below.  
 \item The conditional prior for $\theta_S$, given $S$, is where the data enters into the prior.  To avoid the sub-optimal behavior of the thin-tailed Gaussian prior while simultaneously retaining the computational convenience of the conjugate Gaussian form, \cite{martin.mess.walker.eb} suggested 
 \[ (\theta_S \mid S) \sim \nm_{|S|}\bigl( \hat\theta_S, \gamma (X_S^\top X_S)^{-1} \bigr), \quad S \subset \{1,2,\ldots,p\}, \]
 where $\hat\theta_S$ is the $S$-specific least squares estimator---which makes the prior data-dependent---and $\gamma > 0$ is a tuning parameter to be specified.  
 \end{enumerate} 

 This empirical prior is combined with the data-driven likelihood basically according to Bayes's theorem, resulting in a (data-dependent) probability distribution $\Pi^n$ that can be used for making inference on $\theta$.  In particular, the corresponding marginal posterior for the configuration $S=S(\theta)$ can be used for model selection purposes.  Computation is simple and fast, via an efficient Metropolis--Hastings-style Markov chain Monte Carlo procedure, thanks to the conjugacy of the data-centered empirical prior.  An associated R package \texttt{ebreg} \citep{tang2019package} is also available.
 
 The data-driven prior distribution, among other things, implies that this is not a genuinely ``Bayesian'' solution, so we cannot expect that it automatically inherits the good properties that Bayesian solutions typically enjoy.  But \cite{martin.mess.walker.eb} demonstrate theoretically that the posterior $\Pi^n$ achieves the adaptive, minimax optimal asymptotic concentration rate at the true/sparse $\theta^\star$.  In other words, there is no other posterior distribution---genuinely Bayesian or otherwise---that can concentrate around the true $\theta^\star$ any faster.  They also established that, under suitable conditions, the aforementioned marginal posterior distribution for the configuration concentrates its mass asymptotically on the true configuration, thus providing consistent model selection.  We establish similar properties here in this paper, but for the case of high-dimensional GLMs, so more details about the construction and the results will be given below.

\section{Empirical Bayes for high-dimensional GLMs}
\label{S:eb}

\subsection{Prior distribution}

As before, write the structured, high-dimensional coefficient vector $\theta$ as $(S,\theta_S)$, where the configuration $S$ is a subset of $\{1,2,\ldots,p\}$ and $\theta_S$ is a configuration-specific parameter that respects the structure determined by $S$.  Based on this decomposition, we follow \cite{martin.mess.walker.eb} and proceed with specification of the (empirical) prior $\Pi_n$ for $\theta$ hierarchically.  First, the marginal prior for $S$ has mass function 
\begin{equation}
\label{eq:complexity.prior}
\pi_n(S) \propto \textstyle{p \choose |S|}^{-1} p^{-\beta |S|}, \quad \text{$S \subset \{1,2,\ldots,p\}$ such that $|S| \leq s_n$}, 
\end{equation}
where $\beta > 0$ is a constant to be specified and $s_n$ is a deterministic, diverging sequence.  This is the same as the marginal prior for $S$ in Section~\ref{SS:prior.background}.  Second, the data-driven conditional prior for $\theta_S$, given $S$, is 
\begin{equation}
\label{eq:conditional.prior}
(\theta_S \mid S) \sim \Pi_{n,S} := \nm_{|S|}\bigl( \hat\theta_S, \gamma J_n(S,\hat\theta_S)^{-1} \bigr), 
\end{equation}
where $\gamma > 0$ is a constant to be specified. Above, the prior center is $\hat\theta_S$, the data-driven maximum likelihood estimator for the given $S$ and, similarly, $J_n(S,\hat\theta_S)$ is the corresponding data-driven observed Fisher information matrix.   So this empirical conditional prior has a similar form as that in Section~\ref{SS:prior.background} for the linear model case, but the specific pieces that go into it are just different here in the GLM setting.  

As indicated above, the conditional prior for $\theta_S$, given $S$, is data-driven in the sense that both the prior mean vector and covariance matrix depend on data $y$ through $\hat\theta_S$.  Also, recall that the diagonal entries of the information matrix $J_n(S,\hat\theta_S)$ are growing like $O(n)$, so the prior covariance matrix is rather small.  This is counter-intuitive when the prior center is a fixed constant, but makes perfect sense when the prior mean is data-driven.  That is, we ``believe in'' the data-driven prior center so the prior ought to be relatively tightly concentrated there; plus, if the prior covariance matrix were large, then there would be no point in/benefit to the data-driven prior centering. 

To summarize, the sparsity-encouraging empirical prior $\theta \sim \Pi_n$ is given by 
\begin{equation}
\label{eq:prior.big}
\Pi_n(d\theta) = \sum_S \pi_n(S) \, \nm_{|S|}(d\theta_S \mid \hat\theta_S, \gamma J_n(S,\hat\theta_S)^{-1}) \otimes  \delta_{0_{S^c}}(d\theta_{S^c}),
\end{equation}
where the sum is over all configurations $S$ supported by the prior mass function $\pi_n$ and $\delta_{0_{S^c}}(d\theta_{S^c})$ denotes a Dirac point mass differential term at the origin for the component $\theta_{S^c}$.  The empirical prior depends on the hyperparameters $(\beta,\gamma,s_n)$ which will be discussed in more detail below.  As is common, the theory offers some guidance on how to choose these hyperparameters, but not enough to fully determine their values.

\subsection{Posterior distribution}

If $L_n$ denotes the GLM's likelihood, and $\Pi_n$ the empirical prior in \eqref{eq:prior.big} with the mixture form, then the proposed posterior distribution for $\theta$ is defined as 
\begin{equation}
\label{eq:post}
\Pi^n(d\theta) = \frac{L_n(\theta)^\alpha \, \Pi_n(d\theta)}{\int_{\RR^p} L_n(\vartheta)^\alpha \, \Pi_n(d\vartheta)}, \quad \theta \in \RR^p, 
\end{equation}
where $\alpha \in (0,1)$ is a fixed constant that can be chosen arbitrarily close to 1.  

Our proposed use of a power-likelihood in the Bayesian updating might make some readers uncomfortable, so some remarks on this are in order.  First, with our use of a data-driven prior, the lines between the ``likelihood part'' and ``prior part'' in Bayes's formula have been blurred.  So, one can easily adjust the above formula so that it is the ordinary likelihood $L_n$ combined with a slightly modified empirical prior $\widetilde\Pi_n(d\theta) \propto L_n(\theta)^{-(1-\alpha)} \, \Pi_n(d\theta)$, and get exactly the same posterior.  In our opinion, the version in \eqref{eq:post} is preferred because it is more transparent.  Second, the original motivation for choosing $\alpha < 1$ \citep[e.g.,][]{martin.walker.eb} was to prevent the posterior from tracking the data too closely as a result of its double-use of the data; this is discussed extensively in, e.g., \cite{walker.hjort.2001} and \cite{walker.lijoi.prunster.2005a}.  Relatively recent evidence suggests that a choice of $\alpha < 1$ is not necessary for good posterior concentration properties \citep[e.g.,][]{belitser.nurushev.uq, belitser2020empirical}.  But what is needed to accommodate $\alpha=1$ adds significant technical complications without any benefits in terms of faster rates, etc.  Indeed, in the case of GLMs, \cite{jeong2021posterior} pointed out that there are non-trivial differences in the strength of their theoretical results for $\alpha=1$ versus $\alpha < 1$; in particular, much stronger conditions are required in some cases to get the same concentration rates using $\alpha=1$ compared to $\alpha < 1$.  Finally, there may even be some practical benefits to the use of power-likelihoods, e.g., in terms of robustness \citep[e.g.,][]{miller.dunson.power, syring.martin.scaling, syring.martin.subexp}, ``safety'' \citep[e.g.,][]{grunwald.ommen.scaling, grunwald.mehta.rates}, and/or uncertainty quantification \citep[e.g.,][]{martin.tang.jmlr, martin2020empirical}.

Back to the task at hand, thanks to the parametrization $\theta = (S,\theta_S)$ and the hierarchical prior, a marginal posterior distribution for $S$ is available, i.e., 
\begin{align*}
\pi^n(S) & = \frac{\pi_n(S) \int_{\RR^{|S|}} L_n(S,\theta_S)^\alpha \, \nm_{|S|}(\theta_S \mid \hat\theta_S, \gamma J_n(S,\hat\theta_S)^{-1}) \, d\theta_S}{\sum_R \pi_n(R) \int_{\RR^{|R|}} L_n(R,\theta_R)^\alpha \, \nm_{|R|}(\theta_R \mid \hat\theta_R, \gamma J_n(R,\hat\theta_R)^{-1}) \, d\theta_R} \\
& \propto \pi_n(S) \int_{\RR^{|S|}} L_n(S,\theta_S)^\alpha \, \nm_{|S|}(\theta_S \mid \hat\theta_S, \gamma J_n(S,\hat\theta_S)^{-1}) \, d\theta_S,
\end{align*}
where $x \mapsto \nm(x \mid \mu, \sigma^2)$ denotes a $\nm(\mu, \sigma^2)$ density.  Although there is generally no closed-form expression for the last integral above, a formal Laplace approximation gives a nice, simple expression:
\begin{equation}
\label{eq:S.post}
\pi^n(S) \propto \pi(S) \, (1 + \alpha \gamma)^{-|S|/2} \, L_n(S,\hat\theta_S)^\alpha, \quad \text{all large $n$}. 
\end{equation}
\cite{cao2020variable} also employ a Laplace approximation, though their expression is different because their prior is different.  Of course, it is too much to expect that this approximation be accurate simultaneously across {\em all} configurations, but we do not need such a strong result.  It is enough that this approximation be accurate over a class of relatively small configurations \citep[e.g.,][]{shun1995laplace, barber2016laplace}.  This class is described in Section~\ref{S:theory} below, and a precise result on the Laplace approximation's accuracy is given in Lemma~\ref{lem:laplace} in Appendix~\ref{AA:marginal}.  Details on posterior computation are given next.

\subsection{Computation}
\label{SS:computation}

If variable selection is the goal, then focus is on the marginal posterior for $S$.  We propose to use the Laplace approximation of $\pi^n(S)$ in \eqref{eq:S.post} in a simple Metropolis--Hastings Markov chain Monte Carlo scheme.  Note that this does not require that we can evaluate the normalizing constant implicit in \eqref{eq:S.post}.  

Given a proposal function $q(S' \mid S)$, one iteration of the Metropolis--Hastings algorithm is as follows:
 \begin{enumerate}
\item Given a current state $S$, sample $S'\sim q(\cdot \mid S)$.
\item Go to the new state $S'$ with probability 
\[\min\Bigl\{1, \frac{\pi^n(S)}{\pi^n(S')} \frac{q(S' \mid S)}{q(S \mid S')}\Bigr\},\]
where $\pi^n(S)$ is defined in \eqref{eq:S.post}. Otherwise, stay in the current state $S$.
\end{enumerate}
We use a proposal distribution that is symmetric, one that samples $S'$ uniformly from those that differ from $S$ in exactly one position. This simplifies our computations as the $q$-ratio above is simply 1.  This process is repeated $M$ times, which yields a sample of configurations $S^{(1)}, \ldots, S^{(M)}$ from our posterior distribution $\pi^n(S)$; this is after a burn-in period that excludes the first 20\% of the samples generated.  This is relatively efficient, since the likelihood-based ingredients---the MLE $\hat\theta_S$, the Fisher information $J_n(S,\hat\theta_S)$, etc.---only need to be evaluated for the configurations $S$ that are selected in the MCMC.  

For variable selection, a relevant quantity is the {\em inclusion probability} associated with each candidate variable $j=1,\ldots,p$ that could be included.  The simplest way to express this is as the posterior probability that coefficient $\theta_j$ attached to variable $X_j$ is non-zero.  With a slight abuse of notation, we will refer to the inclusion probability for variable $j$ as $\pi^n(j)$, which is 
\begin{equation}
\label{eq:ip}
\pi^n(j) := \pi^n(\{S: S \ni j\}) \approx \frac{1}{M} \sum_{m=1}^M 1\{S^{(m)} \ni j\}, 
\end{equation}
where the right-hand side is the Monte Carlo approximation, the proportion of those configurations $S^{(m)}$'s drawn that include variable $j$.  From here, a natural variable selection procedure would include all those variables for which the inclusion probability exceeds a specified threshold; see Section~\ref{S:examples} below.  

If there is also interest in estimation of/inference on the coefficients $\theta$, then one can easily augment the above Monte Carlo scheme by inserting a rejection sampling step where, given $S$, the corresponding coefficient $\theta_S$ is drawn from the corresponding conditional posterior.  Thanks to the empirical prior structure, a simpler alternative would be to average the $S$-specific MLEs with respect to the the posterior distribution $\pi^n$ of $S$, which is akin to the exponential aggregation in, e.g., \citet{rigollet.tsybakov.2012}. Similarly, if the goal is prediction of a new response $\tilde y$ associated with a new set of covariate values $\tilde X$, then a third step is added where in a draw is made from the posited model given $(S,\theta_S, \tilde X)$. 

When $S$ is the sole focus, alternatively, a shotgun stochastic search (SSS) algorithm can be employed, as in Algorithm 1 of \cite{cao2020variable}. 
In SSS, models that are neighbors of a selected model are evaluated, and then the next chosen model is sampled from these neighbors proportional to their posterior probabilities, repeating the process. This is more efficient than the aforementioned Monte Carlo strategy in effectively exploring the configuration space. In practice, however, we found that our method with $M=10^4$ posterior samples (with a 20\% burn-in), computes significantly faster; this is likely due to SSS having to compute posterior probabilities for all the neighboring models.

\section{Asymptotic properties}
\label{S:theory}

\subsection{Setup and conditions}

\cite{narisetty2018skinny} and \cite{cao2020variable} investigate certain asymptotic properties of their proposed posterior for $\theta$, but they focus exclusively on (a)~logistic regression and (b)~results concerning the marginal posterior $\pi^n$ for the configuration $S$.  Here we extend the analysis beyond the logistic regression case to arbitrary GLMs as described above, with arbitrary link functions, and establish conditions under which our proposed posterior distribution $\Pi^n$ for $\theta$ concentrates around the true $\theta^\star$ at (nearly) the optimal rate \citep[e.g.,][]{rigollet2012}, adaptive to the unknown sparsity level $|S(\theta^\star)|$.  

Below are two conditions crucial to the developments here and in \cite{narisetty2018skinny} and \cite{cao2020variable}.  These can be roughly classified as conditions on the dimension of the problem and on the design matrix $X$.  The third condition concerns the hyperparameters in our empirical prior.  

\begin{cond}
\label{cond:dim}
$p=p_n \to \infty$ and $\log p = o(n)$ as $n \to \infty$.
\end{cond}

\begin{cond}
\label{cond:design}
There exists $K > 0$, $\lambda > 0$, $\tau \in [0,1]$, and $\tau' \in (\tau,1]$ such that 
\begin{itemize}
\item[(a)] the entries in $X$ are bounded in absolute value by $K$
\item[(b)] if $\lambda_\text{min}$ and $\lambda_\text{max}$ are operators that return the smallest and largest eigenvalues of their arguments, respectively, then
\begin{equation}
\label{eq:eigenvalue}
\lambda \leq \min_{S: |S| \leq |S^\star| + s_n} \lambda_\text{min}\{ n^{-1} J_n(S,\theta_S^\dagger) \} \leq \Lambda_{|S^\star|+s_n} \leq K^2 (n / \log p)^\tau, 
\end{equation}
where 
\[ \Lambda_k = \max_{S: |S| \leq k} \lambda_\text{max}\{n^{-1} J_n(S,\theta_S^\dagger)\}, \quad k=1,2,\ldots, \]
and 
\begin{equation}
\label{eq:s.n}
s_n = O\bigl( (n / \log p)^{(1-\tau')/2} \bigr), \quad n \to \infty. 
\end{equation}
\end{itemize} 
\end{cond}

\begin{cond}
\label{cond:prior}
The power $\alpha > 0$ in \eqref{eq:post} is strictly less than 1. Also:
\begin{itemize}
\item[(a)] the prior hyperparameter $\gamma > 0$ in \eqref{eq:conditional.prior} satisfies 
\begin{equation}
\label{eq:gamma}
\gamma = O(\Lambda_{2 s_n}^2), \quad n \to \infty, 
\end{equation}
where $\Lambda_s$ and $s_n$ are as defined above, and 
\item[(b)] the prior hyperparameter $\beta > 0$ in \eqref{eq:complexity.prior} is such that $p^{\beta - \kappa} > s_n$, where $\kappa > 1$ is the constant specified in Lemma~\ref{lem:loglik.diff} and $s_n$ is as in \eqref{eq:s.n}.  
\end{itemize}
\end{cond}

Conditions~\ref{cond:dim} and \ref{cond:design}(a) are standard in the high-dimensional inference literature.  The lower bound in Condition~\ref{cond:design}(b) is a type of restricted eigenvalue condition, similar to those assumed in \cite{narisetty2018skinny} and \cite{cao2020variable}.  The upper bound weakens and generalizes the ``bounded eigenvalue condition'' in, e.g., \cite{bondell.reich.2012}.  Since $J_n(S,\theta_S^\dagger) = X_S^\top W(S,\theta_S^\dagger) X_S$, it is clear that if $n^{-1} X_S^\top X_S$ has bounded eigenvalues, and if the entries of $W$ are uniformly bounded, as would be the case in Examples~5--8 in \cite{jeong2021posterior}, including logistic regression, then the upper bound in \eqref{eq:eigenvalue} holds with $\tau=0$.  Such cases correspond to the weakest constraint \eqref{eq:s.n} on the support for $S$, since we can take $\tau'=0$ too.  More generally, if $n^{-1} X_S^\top X_S$ has bounded eigenvalues, then $\Lambda_{|S|}$ is bounded by the maximum entry on the diagonal of $W(S,\theta_S^\dagger)$.  Since the diagonal entries are typically increasing functions of their arguments, the bound is $w(\|X_S \theta_S^\dagger\|_\infty)$.  Since the focus is on large configurations, and since $\|X_S \theta_S^\dagger\|_\infty = \|X_S \theta_S^\star\|_\infty = \|X\theta^\star\|_\infty$ for $S \supset S^\star$, the upper bound in \eqref{eq:eigenvalue} is only slightly stronger than assuming ``$w(\|X\theta^\star\|_\infty) \lesssim (n/\log p)^\tau$''.  The Poisson log-linear model is one of the most challenging examples, where $h = \dot b^{-1}$ and $\xi$ is the identity, so $w(\eta) = e^\eta$.  For our Condition~\ref{cond:design} to be met in the Poisson case, we would roughly need $\theta^\star$ to satisfy 
\[ \|X\theta^\star\|_\infty \lesssim \log\{(n/\log p)^\tau\} = O(\log n). \]
When $p$ is polynomial in $n$, this restriction is equivalent to that in Remark~2 of \cite{jeong2021posterior}; when $\log p$ is a small power of $n$, the above restriction is stronger than theirs.  But our sometimes-stronger condition here allows for a more extensive asymptotic analysis, i.e., results on the marginal posterior for $S$. 

For Condition~\ref{cond:prior}(a), the constant $\tau$ in \eqref{eq:eigenvalue} is determined by $X$, so, theoretically, it is not impossible to determine $\tau$ and to set $\gamma$ in \eqref{eq:gamma} accordingly.  For example, if $X$ is such that $s \mapsto \Lambda_s$ is uniformly bounded, then \eqref{eq:eigenvalue} holds with $\tau=0$ and then $\gamma$ can be taken as a constant too.  When $\tau > 0$, the corresponding $\gamma$ is a diverging sequence in $n$.  Recall that the primary part of the $(\theta_S \mid S)$ covariance matrix is $J_n(S,\hat\theta_S)^{-1}$, which itself is $O(n^{-1})$.  So, multiplying this by  $\gamma \to \infty$ still allows the prior mass to be concentrating around the data-driven center $\hat\theta_S$, just slower.  Finally, as argued in \cite{narisetty2018skinny}, the constant $\kappa$ can be chosen arbitrarily close to 1, so ``$\beta > \kappa$,'' which is implied by Condition~\ref{cond:prior}(b), is just a little stronger than ``$\beta > 1$.'' Indeed, since $s_n$ is growing strictly slower than $n^{1/2}$, it suffices to take $\beta - \kappa$ greater than $\frac12 (\log n)/(\log p) \leq \frac12$. 

%Incidentally, the rationale behind the $(w,w')$ relationship between $\Lambda_{s_n}$ and $s_n$ is to ensure that $n^{-1} s_n^2 \Lambda_{s_n} \log p = o(1)$ as $n \to \infty$.  This is needed to establish the information matrix continuity result in Lemma~\ref{lem:Jn} that is central to the proofs of our main results.  

Below we present two different types of results.  The first type concerns generally how the posterior distribution $\Pi^n$ for the coefficient vector $\theta$ concentrates its mass around the sparse true value $\theta^\star$.  The second type concerns how the marginal posterior mass function $\pi^n$ for $S$ concentrates around its true value $S^\star$, where $S^\star = S(\theta^\star)$ is the configuration corresponding to the true $\theta^\star$.  The proofs, presented in Appendix~\ref{S:proofs}, follow those in \citet{martin.mess.walker.eb} and \citet{martin.walker.eb} relatively closely.  The key difference is that, here, we cannot get closed-form expressions for the posterior, so suitable in-expectation bounds as in the above references are out of reach.  The proofs here are based on in-probability bounds, which are more flexible, so our general strategies here might be applicable in other situations too.

Although the Gaussian linear model is a GLM, it is possible to derive equivalent results for this model directly and under weaker conditions \citep{martin.mess.walker.eb}.  So the machinery presented below is intended only for the genuinely more complex GLM setting.

\subsection{Posterior concentration results}

As a first result, we consider concentration of the full posterior for $\theta$ around the true, sparse coefficient vector $\theta^\star$ under a statistically universal metric, namely, the Hellinger distance between joint distributions of $y$ determined by the true $\theta^\star$ and by a generic $\theta$.  More specifically, if $p_\theta(y \mid x)$ denotes the distribution of (scalar) $y$, given covariate vector $x$ and coefficient vector $\theta$, then define the (expected) Hellinger distance $H_n$ as 
\begin{equation}
\label{eq:hellinger}
H_n(\theta^\star, \theta) = \Bigl[ \frac1n \sum_{i=1}^n \int \{ p_\theta(y_i \mid x_i)^{1/2} - p_{\theta^\star}(y_i \mid x_i)^{1/2} \}^2 \, dy_i \Bigr]^{1/2}. 
\end{equation}
This is just the expected squared Hellinger distance between marginals, where expectation is with respect to the empirical distribution of the rows $x_i$ in the matrix $X$.  Note that $H_n$ depends on $X$. 
%, but not on the data $y$.  

\begin{thm}
\label{thm:h.rate}
Under Conditions~\ref{cond:dim}--\ref{cond:prior}, the posterior $\Pi^n$ defined above satisfies 
\begin{equation}
\label{eq:h.rate}
\sup_{\theta^\star: |S(\theta^\star)| \leq s_n} \E_{\theta^\star} \Pi^n(\{\theta: H_n(\theta^\star, \theta) > M \eps_n(\theta^\star) \}) \to 0, \quad n \to \infty, 
\end{equation}
where $\eps_n^2(\theta^\star) = n^{-1} s^\star \log p$, with $s^\star=|S(\theta^\star)|$ and $M > 0$ a  large constant. 
\end{thm}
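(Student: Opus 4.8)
The plan is to bound the posterior mass of the bad set $A_n = \{\theta : H_n(\theta^\star,\theta) > M\eps_n(\theta^\star)\}$ through the usual fractional-posterior decomposition $\Pi^n(A_n) = N_n / D_n$, where, after cancelling $L_n(\theta^\star)^\alpha$ from the numerator and denominator of \eqref{eq:post},
\[
N_n = \int_{A_n} \Bigl( \tfrac{L_n(\theta)}{L_n(\theta^\star)} \Bigr)^\alpha \, \Pi_n(d\theta), \qquad D_n = \int_{\RR^p} \Bigl( \tfrac{L_n(\theta)}{L_n(\theta^\star)} \Bigr)^\alpha \, \Pi_n(d\theta).
\]
I would show that, uniformly over $\theta^\star$ with $|S(\theta^\star)| \le s_n$, the denominator satisfies $D_n \ge e^{-C s^\star \log p}$ on an event of probability tending to one, while the numerator is of strictly smaller order $e^{-c M^2 s^\star \log p}$. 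Since $n\eps_n^2 = s^\star \log p$, taking $M$ large enough that $cM^2 > C$ then forces the ratio, and hence $\E_{\theta^\star}\Pi^n(A_n)$, to zero uniformly.

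For the denominator I would keep only the true configuration, so that $D_n \ge \pi_n(S^\star)\int (L_n(S^\star,\theta_{S^\star})/L_n(\theta^\star))^\alpha\,\nm_{|S^\star|}(\theta_{S^\star}\mid\hat\theta_{S^\star},\gamma J_n(S^\star,\hat\theta_{S^\star})^{-1})\,d\theta_{S^\star}$. The prior mass obeys $\pi_n(S^\star) \gtrsim \binom{p}{s^\star}^{-1} p^{-\beta s^\star} \ge e^{-(1+\beta)s^\star \log p}$. For the inner integral, the key observation is that $\ell_n(S^\star,\hat\theta_{S^\star}) \ge \ell_n(\theta^\star)$, because $\theta^\star$ is itself a candidate within configuration $S^\star$, so the likelihood ratio at the prior center is at least one; a second-order expansion of $\ell_n$ about $\hat\theta_{S^\star}$, combined with the spectral control of $J_n(S^\star,\hat\theta_{S^\star})$ from Condition~\ref{cond:design} and the Laplace estimate of Lemma~\ref{lem:laplace}, shows the integral is $\gtrsim (1+\alpha\gamma)^{-s^\star/2}$. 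Because Condition~\ref{cond:prior}(a) keeps $\gamma$ at most polynomial in $n/\log p$, this contributes only a further $e^{-C' s^\star \log p}$ factor, yielding the claimed bound.

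The numerator is where the real work lies. Were $\Pi_n$ a fixed prior, a one-line first-moment argument would finish the proof: the R\'enyi-affinity identity gives $\E_{\theta^\star}[(L_n(\theta)/L_n(\theta^\star))^\alpha] = \prod_{i=1}^n \int p_\theta(y_i\mid x_i)^\alpha p_{\theta^\star}(y_i\mid x_i)^{1-\alpha}\,dy_i \le \exp\{-c_\alpha \sum_{i=1}^n h_i^2\} = e^{-c_\alpha n H_n^2(\theta^\star,\theta)}$, where $h_i^2$ is the per-observation squared Hellinger distance appearing in \eqref{eq:hellinger}; on $A_n$ one has $n H_n^2 > M^2 s^\star \log p$, so Fubini would give $\E_{\theta^\star} N_n \le \int_{A_n} e^{-c_\alpha n H_n^2}\,\Pi_n(d\theta) \le e^{-c_\alpha M^2 s^\star \log p}$. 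The obstacle, and the reason in-expectation bounds are out of reach, is precisely that $\Pi_n$ is data-driven through $\hat\theta_S$ and $J_n(S,\hat\theta_S)$, so the expectation cannot be pulled inside the prior integral.

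I expect this to be the main difficulty, and I would resolve it by an in-probability argument on a good event $G_n$, with $\prob_{\theta^\star}(G_n^c)\to 0$, on which, uniformly over all $S$ with $|S|\le s^\star+s_n$, the MLE $\hat\theta_S$ lies in a controlled neighborhood of the Kullback--Leibler projection $\theta_S^\dagger$ and $J_n(S,\hat\theta_S)$ is spectrally comparable to the deterministic matrix $J_n(S,\theta_S^\dagger)$ whose eigenvalues are pinned by Condition~\ref{cond:design}; such an event is furnished by Lemmas~\ref{lem:Jn}--\ref{lem:mle}. On $G_n$ the random Gaussian prior can be dominated---on the region carrying its mass and up to a multiplicative $e^{O(s^\star \log p)}$ factor---by the \emph{deterministic} Gaussian centered at $\theta_S^\dagger$ with precision $\gamma^{-1}J_n(S,\theta_S^\dagger)$, against which Fubini is legitimate and the affinity bound applies; the combinatorial sum over configurations is then absorbed by the prior decay $p^{-\beta|S|}$ of Condition~\ref{cond:prior}(b), which dominates both the $\binom{p}{|S|}$ configurations and the residual $(1+\alpha\gamma)^{-|S|/2}$ factors. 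The delicate point is the center-swap: the ratio of the two Gaussians carries a cross term linear in $\theta_S$, so the domination only holds once integrated against the likelihood, and controlling it---via the $J_n$-norm bound on $\hat\theta_S-\theta_S^\dagger$ available on $G_n$ together with a completion of squares inside the affinity exponent---is the crux of the in-probability technique. Combining the denominator lower bound with this numerator upper bound on $G_n$, and adding $\prob_{\theta^\star}(G_n^c)$, gives \eqref{eq:h.rate}, with all constants depending only on $X$ and the hyperparameters and hence uniform over the stated class of $\theta^\star$.
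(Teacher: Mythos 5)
Your proposal is correct and is essentially the paper's own argument, just unpacked: your denominator lower bound (prior mass on $S^\star$ plus the Laplace-type estimate near $\hat\theta_{S^\star}$) is exactly the verification of the local prior condition in Lemma~\ref{lem:lp}, and your in-probability domination of the empirical Gaussian prior by a deterministic Gaussian centered at $\theta_S^\dagger$---including the Cauchy--Schwarz control of the cross term via the MLE rate in Lemma~\ref{lem:mle}---is exactly the verification of the global prior condition in Lemma~\ref{lem:gp}. The only difference is packaging: the paper then invokes Theorem~2 of \citet{martin2019data} to execute the numerator/denominator--R\'enyi-affinity machinery that you re-derive inline.
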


This is the same rate established in Theorem~2 of \cite{jeong2021posterior} for a class of Bayesian posterior distributions based on priors that do not depend on data.  This is also effectively the minimax optimal rate, i.e., $\{n^{-1} s^\star \log(p/s^\star)\}^{1/2}$, in sparse, high-dimensional linear regression that is attained by \citet{abramovich.grinshtein.2010}, \citet{abramovich2016model}, \citet{ariascastro.lounici.2014}, \citet{martin.mess.walker.eb}, and \citet{belitser2020empirical}.  The difference between the rate in Theorem~\ref{thm:h.rate} and the minimax optimal rate is negligible since $p \gg s^\star$ and, consequently, $\log(p/s^\star) \sim \log p$. It is also important to recognize that the rate achieved is optimal corresponding to the unknown, true sparsity level $|S(\theta^\star)|$.  Since the method itself has no knowledge of this sparsity level, we say that the minimax optimal rate is achieved {\em adaptively}.  

Admittedly, the Hellinger rate is not so easily interpretable, but rates under different metrics are possible.  For a more interpretable result (modulo more complicated conditions), we can appeal to the arguments given in the proof of Theorem~3 in \citet{jeong2021posterior}.  Their proof shows that a Hellinger rate like in Theorem~\ref{thm:h.rate} above and an effective-dimension bound like in Theorem~\ref{thm:dim} below together imply a rate in terms of other distances, including the $\ell_2$-distance on $\theta$.  Their argument is not for a specific kind of posterior distribution, so what works for them in their case works equally well for us here.  The following corollary makes our claims precise.

\begin{cor}
\label{cor:l2.rate}
Under Conditions~\ref{cond:dim}--\ref{cond:prior}, the posterior $\Pi^n$ defined above satisfies 
\begin{equation}
\label{eq:theta.post.rate}
\sup_{\theta^\star: |S(\theta^\star)| \leq s_n} \E_{\theta^\star} \Pi^n\Bigl( \Bigl\{\theta: \|\theta-\theta^\star\|_2^2 > \frac{M\eps_n^2(\theta^\star) }{\phi^2(K|S(\theta^\star)|, W(\theta^\star))} \Bigr\} \Bigr) \to 0, \quad n \to \infty, 
\end{equation}
where $\eps_n^2(\theta^\star) = n^{-1} |S(\theta^\star)| \log p$, $M > 0$ and $K > 0$ are constants, and 
\[ \phi(s, W) = \inf_{\theta: 1 \leq |S(\theta)| \leq s} \frac{\|W^{1/2} X \theta\|_2}{n^{1/2} \|\theta\|_2}, \]
denotes the smallest $s$-sparse singular value of $X^\top W X$.
\end{cor}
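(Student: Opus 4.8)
The plan is to deduce the $\ell_2$-rate directly from the Hellinger rate of Theorem~\ref{thm:h.rate} together with the effective-dimension bound of Theorem~\ref{thm:dim}, following the argument in the proof of Theorem~3 of \citet{jeong2021posterior}. Since that argument is not tied to any particular construction of the posterior, it transfers to our empirical Bayes posterior once its two inputs are supplied. Fix a sparse $\theta^\star$ with $s^\star = |S(\theta^\star)|$, and let $K$ be the constant for which Theorem~\ref{thm:dim} gives $\E_{\theta^\star}\Pi^n(\{\theta: |S(\theta)| > (K-1)s^\star\}) \to 0$; this is the $K$ appearing in \eqref{eq:theta.post.rate}. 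I would define the ``good'' event
\[ A_n = \{\theta: H_n(\theta^\star,\theta) \leq M\eps_n(\theta^\star) \text{ and } |S(\theta)| \leq (K-1)s^\star\}. \]
By Theorem~\ref{thm:h.rate}, Theorem~\ref{thm:dim}, and a union bound, $\E_{\theta^\star}\Pi^n(A_n^c) \to 0$ uniformly over $\{\theta^\star: s^\star \leq s_n\}$. It then suffices to show that on $A_n$ the squared error $\|\theta-\theta^\star\|_2^2$ is at most a constant multiple of $\eps_n^2(\theta^\star)/\phi^2(Ks^\star, W(\theta^\star))$, since the exceptional set in \eqref{eq:theta.post.rate} is then contained in $A_n^c$ after renaming $M$.

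The crux is a local comparison between the averaged squared Hellinger distance and the Fisher-information-weighted quadratic form. For a one-parameter exponential-family member, the Bhattacharyya affinity between the laws with natural parameters $\eta_i$ and $\eta_i^\star$ equals $\exp\{b(\bar\eta_i) - \tfrac12[b(\eta_i)+b(\eta_i^\star)]\}$, where $\bar\eta_i = (\eta_i+\eta_i^\star)/2$. A second-order expansion of the convex $b$ together with the elementary inequality $1 - e^{-t} \geq \tfrac12\min(t,1)$ give, for intermediate points $\tilde\eta_i$,
\[ H_n^2(\theta^\star,\theta) \gtrsim \frac1n \sum_{i=1}^n \min\{\ddot b(\tilde\eta_i)(\eta_i - \eta_i^\star)^2,\, 1\}. \]
On $A_n$ the natural-parameter differences are uniformly small, so the minimum is attained in its first argument, and Condition~\ref{cond:design}—through the eigenvalue control in \eqref{eq:eigenvalue}—lets me replace each $\ddot b(\tilde\eta_i)$, after accounting for the link-function Jacobian absorbed into the weight, by $W_{ii}(\theta^\star) = w(x_i^\top\theta^\star)$ up to a fixed constant. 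This yields $H_n^2(\theta^\star,\theta) \gtrsim n^{-1}\|W(\theta^\star)^{1/2} X(\theta-\theta^\star)\|_2^2$.

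With this in hand the remaining steps are routine. Because $|S(\theta-\theta^\star)| \leq |S(\theta)| + s^\star \leq Ks^\star$ on $A_n$, the definition of $\phi$ gives $n^{-1}\|W(\theta^\star)^{1/2} X(\theta-\theta^\star)\|_2^2 \geq \phi^2(Ks^\star, W(\theta^\star))\,\|\theta-\theta^\star\|_2^2$. Chaining the two inequalities, on $A_n$ we obtain $\|\theta-\theta^\star\|_2^2 \lesssim H_n^2(\theta^\star,\theta)/\phi^2(Ks^\star, W(\theta^\star)) \leq M^2\eps_n^2(\theta^\star)/\phi^2(Ks^\star, W(\theta^\star))$, which is exactly the complement of the set in \eqref{eq:theta.post.rate}. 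Integrating over $y$ and using the bound on $\E_{\theta^\star}\Pi^n(A_n^c)$ finishes the argument.

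I expect the main obstacle to be the Hellinger-to-quadratic comparison of the second paragraph, specifically the replacement of $\ddot b(\tilde\eta_i)$ at the Taylor remainder point by the weight $w(x_i^\top\theta^\star)$. This is harmless for bounded-variance families such as logistic regression but delicate for the Poisson log-linear model, where $w(\eta) = e^\eta$ grows, and it is precisely here that the eigenvalue and tail controls of Condition~\ref{cond:design} are consumed; one must also verify the localization—that the parameter differences are uniformly small enough for the quadratic branch of $\min\{\cdot,1\}$ to be active—across the relevant sparsity class. Since both points are handled in the proof of Theorem~3 of \citet{jeong2021posterior}, I would cite and adapt their argument rather than reprove it.
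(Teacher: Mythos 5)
Your proposal is correct and takes essentially the same route as the paper: the paper derives the corollary exactly by combining the Hellinger rate of Theorem~\ref{thm:h.rate} with the effective-dimension bound of Theorem~\ref{thm:dim} and then invoking the posterior-agnostic argument from the proof of Theorem~3 of \citet{jeong2021posterior} to convert Hellinger distance into $\ell_2$-distance via the sparse singular value $\phi$. Your additional sketch of the affinity/Taylor-expansion comparison is just an unpacking of that cited argument, which the paper likewise defers to rather than reproves.
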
 

The appearance of an additional term---the sparse singular value---depending on $X$ is expected since the response $y$ depends directly on $X\theta$, not on $\theta$ itself.  This is easy to see in the linear model case where the Hellinger distance is proportional to the $\ell_2$-norm between fitted values.  So to strip the $X$ away and investigate the posterior concentration directly in terms of $\theta$ requires some conditions on $X$, which are baked into the effect the $\phi$ term has on the rate.  For example, if the $\phi$ term in \eqref{eq:theta.post.rate} is bounded away from 0, which amounts to a condition on $X$, then that term can be absorbed into the constant $M$ and the $\ell_2$-rate agrees with the Hellinger rate above.  In any case, the result here is the same as that proved in \cite{jeong2021posterior}, so the reader interested in details about $\phi$ can refer to their discussion. 

That the posterior for $\theta$ concentrates at the (near) optimal rate for sparse $\theta^\star$ true vector {\em suggests} that the posterior for $S$ is concentrating on the true $S^\star = S(\theta^\star)$, but this is not a consequence of Theorem~\ref{thm:h.rate}.  The asymptotic behavior of the $S$-posterior must be investigated directly.  The first such result concerns the ``effective dimension'' of the posterior distribution for $\theta$ is not much larger than that of $\theta^\star$.  In other words, $\pi^n$ concentrates on $S$ with $|S|$ that are smaller than a multiple of $|S^\star|$, where $S^\star = S(\theta^\star)$.  

\begin{thm}
\label{thm:dim}
Under Conditions~\ref{cond:dim}--\ref{cond:prior}, for any $C > (1 - \alpha \kappa / \beta)^{-1} > 1$, 
\[ \sum_{S: |S| > C|S^\star|} \pi^n(S) \to 0, \quad \text{in $\prob_{\theta^\star}$-probability}, \]
for all $\theta^\star$ such that $|S^\star| \leq s_n$. 
\end{thm}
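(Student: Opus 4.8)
The plan is to bound the tail mass $\sum_{|S|>Cs^\star}\pi^n(S)$ by a ratio and show the numerator is negligible relative to the denominator on a high-probability event. Using the Laplace-approximated marginal posterior \eqref{eq:S.post}, whose accuracy over the relevant class of configurations is exactly the content of Lemma~\ref{lem:laplace}, I would write $\pi^n(S)\propto w(S):=\pi_n(S)\,(1+\alpha\gamma)^{-|S|/2}L_n(S,\hat\theta_S)^\alpha$, so that
\[ \sum_{|S|>Cs^\star}\pi^n(S)=\frac{\sum_{|S|>Cs^\star}w(S)}{\sum_R w(R)}\le\frac{1}{w(S^\star)}\sum_{|S|>Cs^\star}w(S), \]
where the normalizing sum has been lower-bounded by its single term at the true configuration $S^\star$. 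It then suffices to show the right-hand side vanishes in $\prob_{\theta^\star}$-probability.

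The core estimate is a uniform upper bound on the likelihood ratio $w(S)/w(S^\star)$. First I would invoke Lemma~\ref{lem:loglik.diff}: on an event of $\prob_{\theta^\star}$-probability tending to one, $\ell_n(S,\hat\theta_S)-\ell_n(S^\star,\hat\theta_{S^\star})\le\kappa\,|S\setminus S^\star|\log p$ simultaneously over all $S$ with $|S|\le s^\star+s_n$; for configurations that are \emph{not} supersets of $S^\star$ this is obtained by passing through $S\cup S^\star$ and combining monotonicity of the profile likelihood with the $\theta_S^\dagger$-based generalizations in Lemmas~\ref{lem:Jn}--\ref{lem:mle}. Combining this with the explicit prior ratio $\pi_n(S)/\pi_n(S^\star)=\{\binom{p}{s^\star}/\binom{p}{k}\}\,p^{-\beta(k-s^\star)}$ for $|S|=k$ and the determinant factor $(1+\alpha\gamma)^{-(k-s^\star)/2}\le 1$, I would sum over all $S$ of fixed size $k$. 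The key algebraic point is that the combinatorial count of configurations is cancelled by the $\binom{p}{|S|}^{-1}$ factor built into the complexity prior: grouping configurations by $m=|S\cap S^\star|$ and using the identity $\binom{p}{s^\star}\binom{s^\star}{m}\binom{p-s^\star}{k-m}/\binom{p}{k}=\binom{k}{m}\binom{p-k}{s^\star-m}$ leaves a net factor that decays geometrically like $p^{-(\beta-\alpha\kappa)k}$ in $k$.

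Summing the resulting bound over $k>Cs^\star$ produces a geometric series with ratio $p^{-(\beta-\alpha\kappa)}$. Since $\alpha<1$ and Condition~\ref{cond:prior}(b) forces $\beta>\kappa$ (indeed $p^{\beta-\kappa}>s_n$), we have $\beta-\alpha\kappa>0$, so the series is dominated by its leading term; that term tends to zero as soon as $C$ is taken beyond the threshold $(1-\alpha\kappa/\beta)^{-1}$, with the condition $p^{\beta-\kappa}>s_n$ guaranteeing that the number of size-levels ($\le s_n$) and the lower-order polynomial factors cannot overwhelm the decay. Because the whole estimate holds on the intersection of the high-probability events furnished by Lemmas~\ref{lem:laplace} and \ref{lem:loglik.diff}, whose probabilities tend to one uniformly over $\{\theta^\star:|S^\star|\le s_n\}$, the deterministic bound yields the claimed convergence in $\prob_{\theta^\star}$-probability.

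The main obstacle is the uniform likelihood-ratio bound of Lemma~\ref{lem:loglik.diff}, especially its extension to configurations that are not supersets of $S^\star$: there $\hat\theta_S$ estimates the Kullback--Leibler projection $\theta_S^\dagger$ rather than $\theta_S^\star$, so the MLE and information-matrix controls of \cite{narisetty2018skinny} do not apply directly and must be replaced by their $\theta_S^\dagger$-analogues. A secondary difficulty, absent from the conjugate linear-model analysis of \cite{martin.mess.walker.eb}, is that the marginal posterior is available only through the Laplace approximation \eqref{eq:S.post}; one must verify this approximation is accurate uniformly over the configurations entering the sum, which is precisely why the argument proceeds through in-probability rather than in-expectation bounds.
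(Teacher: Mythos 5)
Your proposal is correct and follows essentially the same route as the paper's proof: bound $\pi^n(S)/\pi^n(S^\star)$ by the Laplace-approximation ratio of Lemma~\ref{lem:laplace}, control the likelihood ratio for non-supersets of $S^\star$ by passing to $S\cup S^\star$ and using monotonicity of the profile likelihood so that Lemma~\ref{lem:loglik.diff} applies, and then sum the resulting geometric series in which the ${p \choose |S|}^{-1}$ factor of the complexity prior cancels the configuration count, with the threshold on $C$ arising exactly as you describe. The only cosmetic difference is that the paper treats supersets of $S^\star$ separately (deferring to the proof of Theorem~\ref{thm:no.super.sets}) rather than handling all configurations uniformly through the $|S\setminus S^\star|$ bound, and its combinatorial bookkeeping is less explicit than your grouping by $|S\cap S^\star|$.
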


That the constant $C$ above is greater than 1 follows from the fact that $\beta > \kappa$ and $\alpha < 1$.  Again, the take-away message here is that the posterior distribution for $\theta$ is concentrating on a space that is genuinely low-dimensional and, in particular, is of dimension not much greater than $|S^\star|$. Theorem~\ref{thm:dim} also {\em suggests} that $\pi^n$ is concentrating on $S^\star$, but this is not a direct consequence.  Towards this, we have one more result which states that $\pi^n$ tends to not over-fit, i.e., it tends to avoid supersets of $S^\star$. 

\begin{thm}
\label{thm:no.super.sets}
Under Conditions~\ref{cond:dim}--\ref{cond:prior},
\[ \sum_{S: S \supset S(\theta^\star)} \pi^n(S) \to 0 \quad \text{in $\prob_{\theta^\star}$-probability}, \]
for all $\theta^\star$ such that $|S(\theta^\star)| \leq s_n$. 
\end{thm}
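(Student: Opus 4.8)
The plan is to reduce the posterior mass on proper supersets to a ratio that compares each $S\supsetneq S^\star$ against $S^\star$ itself, and then to sum the resulting ratios by balancing the prior's combinatorial decay against a uniform control on the likelihood gains. Write $N(S) = \pi_n(S)\int_{\RR^{|S|}} L_n(S,\theta_S)^\alpha\,\nm_{|S|}(\theta_S\mid\hat\theta_S,\gamma J_n(S,\hat\theta_S)^{-1})\,d\theta_S$ for the unnormalized marginal, so that $\pi^n(S)=N(S)/\sum_R N(R)$. Since $|S^\star|\leq s_n$, the configuration $S^\star$ lies in the prior's support, and discarding all but that one term in the normalizing sum gives the deterministic bound
\[ \sum_{S\supsetneq S^\star}\pi^n(S)\;\leq\;\sum_{S\supsetneq S^\star}\frac{N(S)}{N(S^\star)}. \]
Because $S^\star$ and every superset of bounded size belong to the class of small configurations over which the Laplace approximation is accurate, Lemma~\ref{lem:laplace} lets me replace each $N(S)$ by $g(S):=\pi_n(S)\,(1+\alpha\gamma)^{-|S|/2}\,L_n(S,\hat\theta_S)^\alpha$, up to multiplicative factors that are uniformly bounded over this class, so it suffices to control $\sum_{S\supsetneq S^\star} g(S)/g(S^\star)$.

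Next I would organize the sum by the number $d=|S|-|S^\star|\geq1$ of spurious variables and separate the prior and likelihood contributions. For the prior, the key simplification is the exact identity
\[ \sum_{\substack{S\supsetneq S^\star\\ |S|=s^\star+d}}\frac{\pi_n(S)}{\pi_n(S^\star)} \;=\; \binom{s^\star+d}{d}\,p^{-\beta d}\;\leq\; s_n^{\,d}\,p^{-\beta d}, \]
which follows from $\pi_n(S)\propto\binom{p}{|S|}^{-1}p^{-\beta|S|}$ together with there being $\binom{p-s^\star}{d}$ such supersets. For the likelihood, since $S\supset S^\star$ forces $\theta_S^\dagger=\theta_S^\star$, the difference $\ell_n(S,\hat\theta_S)-\ell_n(S^\star,\hat\theta_{S^\star})$ is the (nonnegative) likelihood-ratio gain for the $d$ superfluous coefficients, and Lemma~\ref{lem:loglik.diff} supplies, on an event of $\prob_{\theta^\star}$-probability tending to one, the uniform bound $\ell_n(S,\hat\theta_S)-\ell_n(S^\star,\hat\theta_{S^\star})\leq\kappa\,d\log p$ over all such $S$. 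Combining this with $(1+\alpha\gamma)^{-d/2}\leq1$ yields, on that event,
\[ \sum_{\substack{S\supsetneq S^\star\\ |S|=s^\star+d}}\frac{g(S)}{g(S^\star)} \;\leq\; \bigl(s_n\,p^{-(\beta-\alpha\kappa)}\bigr)^{d}. \]

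It then remains to sum the geometric series over $d\geq1$. Writing $\rho=s_n\,p^{-(\beta-\alpha\kappa)}$ and decomposing $\beta-\alpha\kappa=(\beta-\kappa)+(1-\alpha)\kappa$, Condition~\ref{cond:prior}(b) gives $s_n\,p^{-(\beta-\kappa)}<1$, so that $\rho< p^{-(1-\alpha)\kappa}\to0$ because $\alpha<1$ and $\kappa>1$ are fixed while $p\to\infty$; hence $\sum_{d\geq1}\rho^{d}=\rho/(1-\rho)\to0$. This is precisely where the requirement $\alpha<1$ of Condition~\ref{cond:prior} is essential: it manufactures the vanishing factor $p^{-(1-\alpha)\kappa}$ that upgrades the merely-summable bound $\rho<1$ (which is all $\alpha=1$ would deliver) into one that tends to zero. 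As the entire chain of inequalities holds on an event whose probability tends to one, this establishes $\sum_{S\supsetneq S^\star}\pi^n(S)\to0$ in $\prob_{\theta^\star}$-probability.

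I expect the main obstacle to be Lemma~\ref{lem:loglik.diff}, the uniform-in-$S$ control of the likelihood gain, which is exactly where the absence of conjugacy and closed-form posteriors bites. Unlike the linear-model case, $\ell_n(S,\hat\theta_S)-\ell_n(S^\star,\hat\theta_{S^\star})$ is not an exact quadratic form, so one cannot read off a $\chi^2$ law and take expectations; instead one must control the MLE $\hat\theta_S$ and the observed information $J_n(S,\hat\theta_S)$ via the generalized Lemmas~\ref{lem:Jn}--\ref{lem:mle}, obtain a high-probability per-model tail of order $p^{-\kappa d}$, and then absorb a union bound over the $\binom{p-s^\star}{d}\leq p^{d}$ competing supersets. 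This last step is what forces $\kappa>1$, so that $p^{d}\cdot p^{-\kappa d}$ stays summable in $d$. A secondary point to verify is that the Laplace-approximation error in Lemma~\ref{lem:laplace} is controlled simultaneously across all supersets entering the sum, rather than only for a single fixed configuration.
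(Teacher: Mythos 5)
Your proposal is correct and follows essentially the same route as the paper's proof: bound the mass on supersets by the ratios $\pi^n(S)/\pi^n(S^\star)$, control these via the Laplace-approximation bound (Lemma~\ref{lem:laplace}, i.e.\ \eqref{eq:prob.ratio}) and the uniform likelihood bound of Lemma~\ref{lem:loglik.diff}, collapse the combinatorics to $\binom{s^\star+d}{d}p^{-\beta d}\le (s_n p^{-\beta})^d$, and sum the geometric series with ratio $s_n p^{-(\beta-\alpha\kappa)}$. Your closing observation that Condition~\ref{cond:prior}(b) combined with $\alpha<1$ forces this ratio to vanish (not merely stay below one) is exactly the mechanism the paper relies on, stated more transparently than in the paper's own concluding sentence.
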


Virtually the same argument used to prove Theorem~\ref{thm:no.super.sets} can be used to conclude that $\pi^n$ will not concentrate on any $S$ that contains an unimportant variable.  To ensure that $\pi^n$ does not concentrate on models that exclude at least one important variable, it is necessary to assume that the non-zero coefficients attached to the important variables are not too small.  The intuition is that, if an important variable is ``just barely'' important, the data may not be informative enough to detect it given the relatively strong penalty on model complexity.  The following result imposes a version of the familiar {\em beta-min condition} \citep{buhlmann.geer.book, buhlmann2011comment} to ensure that the signals are large enough to be detected; see \eqref{eq:beta.min}.   

\begin{thm}
\label{thm:consistent}
Under Conditions~\ref{cond:dim}--\ref{cond:prior}, $\pi^n\{S(\theta^\star)\} \to 1$ with $\prob_{\theta^\star}$-probability $\to 1$ for all $\theta^\star$ such that $c|S(\theta)^\star| \leq s_n$ and 
\begin{equation}
\label{eq:beta.min}
\min_{j \in S(\theta^\star)} \theta_j^{\star 2} \geq c n^{-1} |S(\theta^\star)| \Lambda_{c|S(\theta^\star)|} \log p, 
\end{equation}
for some constant $c > 1$. 
\end{thm}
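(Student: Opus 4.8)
The plan is to establish the equivalent statement $\sum_{S \neq S^\star} \pi^n(S) \to 0$ in $\prob_{\theta^\star}$-probability, where $S^\star := S(\theta^\star)$ and $s^\star := |S^\star|$, by controlling the ratios $\pi^n(S)/\pi^n(S^\star)$ and summing, since $1 - \pi^n(S^\star) \le \sum_{S \neq S^\star} \pi^n(S)/\pi^n(S^\star)$. Every $S \neq S^\star$ falls into exactly one of two categories: either $S$ carries an extraneous variable, $S \setminus S^\star \neq \emptyset$, or $S \subsetneq S^\star$ misses at least one active variable. The first category is essentially already handled: Theorem~\ref{thm:dim} discards all $S$ with $|S| > C s^\star$ at the cost of a term vanishing in probability, and the argument behind Theorem~\ref{thm:no.super.sets}---together with the remark following it, which extends the overfitting bound from supersets to any configuration containing an inactive variable---shows that the contribution of the remaining $S$ with $S \setminus S^\star \neq \emptyset$ and $|S| \le C s^\star$ is negligible. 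Hence the genuinely new work concerns the second category, the configurations that omit an important variable, and this is precisely where \eqref{eq:beta.min} enters.

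For $S \subsetneq S^\star$, I would use the Laplace-approximated marginal \eqref{eq:S.post} (valid uniformly over this class of small configurations by Lemma~\ref{lem:laplace}) to write
\[
\frac{\pi^n(S)}{\pi^n(S^\star)} \asymp \frac{\pi_n(S)}{\pi_n(S^\star)}\,(1+\alpha\gamma)^{(s^\star - |S|)/2}\,\exp\bigl\{-\alpha[\ell_n(S^\star,\hat\theta_{S^\star}) - \ell_n(S,\hat\theta_S)]\bigr\}.
\]
Here both the complexity-prior ratio and the $(1+\alpha\gamma)$ factor \emph{favor} the smaller model $S$: with $k := s^\star - |S| = |S^\star \setminus S|$, the prior ratio is of order $p^{(1+\beta)k}$ and the $\gamma$-factor is sub-polynomial in $p$ under \eqref{eq:gamma}. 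The entire burden thus rests on showing that the maximized-log-likelihood gap $\ell_n(S^\star,\hat\theta_{S^\star}) - \ell_n(S,\hat\theta_S)$ is large enough to overwhelm these factors together with the combinatorial count $\binom{s^\star}{k} \le p^{k}$ from a union bound over all $S$ missing exactly $k$ active variables.

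The crux is therefore a lower bound on this gap. I would bound $\ell_n(S^\star,\hat\theta_{S^\star}) \ge \ell_n(\theta^\star)$ trivially and decompose $\ell_n(\theta^\star) - \ell_n(S,\hat\theta_S) = [\ell_n(\theta^\star) - \ell_n(S,\theta_S^\dagger)] - [\ell_n(S,\hat\theta_S) - \ell_n(S,\theta_S^\dagger)]$. The second bracket, the cost of fitting $|S|$ parameters, is $O_{\prob}(|S|\log p)$ by Lemmas~\ref{lem:mle}--\ref{lem:loglik.diff}; the first bracket is the empirical deficiency of the best model-$S$ fit, whose expectation is the summed Kullback--Leibler divergence $\sum_i \mathrm{KL}_i(\theta^\star,\theta_S^\dagger)$ and which concentrates around it (again via Lemma~\ref{lem:loglik.diff}). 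Up to the strong convexity of $b$ and the boundedness of $W$ on the relevant region, this divergence is comparable to the minimized weighted residual $\inf_{\theta_S}\|W^{1/2}(X\theta^\star - X_S\theta_S)\|_2^2$; since $S \subsetneq S^\star$, this infimum is a partial minimization of $v \mapsto v^\top J_n(S^\star,\theta^\star_{S^\star}) v$ over the $S$-coordinates with the $(S^\star\setminus S)$-coordinates pinned at $\theta^\star_{S^\star\setminus S}$, so by the Schur-complement property the resulting form inherits the restricted-eigenvalue lower bound $n\lambda$ from Condition~\ref{cond:design}(b), yielding a bound of order $n\lambda\|\theta^\star_{S^\star\setminus S}\|_2^2 \ge n\lambda\, k\min_{j\in S^\star}\theta_j^{\star 2}$. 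Invoking \eqref{eq:beta.min} then gives a gap of order $c\lambda\, s^\star \Lambda_{c s^\star}\, k \log p \gtrsim c\lambda^2 k\log p$, so the likelihood factor decays like $p^{-\alpha c\lambda^2 k}$. Choosing $c$ large enough that $\alpha c \lambda^2 > \beta + 2$ makes the exponent dominate the prior and union-bound exponents, so summing the resulting geometric series over $k = 1, \ldots, s^\star$ produces a bound tending to $0$.

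I expect the main obstacle to be the deterministic Kullback--Leibler lower bound in the last paragraph: passing rigorously from the abstract log-likelihood deficiency to the weighted least-squares residual, and then extracting the clean $n\lambda\|\theta^\star_{S^\star\setminus S}\|_2^2$ bound via the Schur complement while correctly tracking where $\theta_S^\dagger$ (rather than $\theta_S^\star$) enters, since for $S \not\supseteq S^\star$ the MLE no longer targets $\theta_S^\star$. A secondary technical point is ensuring that the Laplace approximation and the MLE-fluctuation estimates hold \emph{uniformly} over the configurations $S \subsetneq S^\star$, which is exactly where the in-probability bounds emphasized in Section~\ref{S:theory} replace the in-expectation bounds available in the linear-model analysis.
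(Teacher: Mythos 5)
Your high-level plan---dispose of oversized configurations via Theorem~\ref{thm:dim} and supersets via Theorem~\ref{thm:no.super.sets}, then do new work where \eqref{eq:beta.min} enters---matches the paper's strategy, but your two-category decomposition has a genuine hole: the ``mixed'' configurations $S$ with $|S| \le C s^\star$ that both contain an inactive variable and omit an active one. You assign these to your first category and claim they are covered by the argument behind Theorem~\ref{thm:no.super.sets} together with the remark following it, but that argument cannot handle them, and the remark is informal prose whose literal reading is only supported once the beta-min analysis is in place. Concretely, for such an $S$ the only likelihood control available without \eqref{eq:beta.min} is the monotonicity bound $\ell_n(S,\hat\theta_S) \le \ell_n(S \cup S^\star, \hat\theta_{S\cup S^\star})$ followed by Lemma~\ref{lem:loglik.diff}, giving
\[
\ell_n(S,\hat\theta_S) - \ell_n(S^\star,\hat\theta_{S^\star}) \;\le\; \kappa (\log p)\, |S \setminus S^\star|,
\]
which is an \emph{upper} bound that never forces decay; meanwhile, when $|S| < s^\star$ the prior ratio $\pi_n(S)/\pi_n(S^\star) = \binom{p}{s^\star}\binom{p}{|S|}^{-1} p^{\beta(s^\star - |S|)} \approx p^{(1+\beta)(s^\star-|S|)}$ \emph{diverges} (and even for $|S| = s^\star$ with one signal swapped for one junk variable the resulting bound $p^{\alpha\kappa}$ does not vanish, while there are roughly $s^\star p$ such configurations). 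The only mechanism that can beat the prior's preference for small models is that omitting signal coordinates genuinely hurts the maximized likelihood, and that requires \eqref{eq:beta.min} for \emph{every} non-superset, not just for $S \subsetneq S^\star$. This is exactly how the paper proceeds: its class is $\{S : |S| \le C s^\star,\ S \not\supseteq S^\star\}$, the beta-min-driven gap bound $\ell_n(S,\hat\theta_S) - \ell_n(S^\star,\hat\theta_{S^\star}) \le -F(\cdot)\, n\rho_n^2$ (imported, with a footnoted correction, from the supplement of Narisetty and He) is invoked uniformly over that whole class, and the sum is then organized by the two indices $s = |S|$ and $t = |S \cap S^\star|$, the factor $\binom{p-s^\star}{s-t}$ counting precisely the extraneous-variable choices that your decomposition drops.

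Two secondary problems in your treatment of the subsets themselves. First, Lemma~\ref{lem:loglik.diff} is a statement about supersets that bounds the likelihood gap from \emph{above}; it provides neither the claimed concentration of $\ell_n(\theta^\star) - \ell_n(S,\theta_S^\dagger)$ around the Kullback--Leibler sum nor anything at all for $S \not\supseteq S^\star$, so your citation of it there does no work---the paper avoids this entirely by citing the (generalized) external gap bound rather than re-deriving it. Second, the fitting cost obtained from Lemma~\ref{lem:mle} combined with Lemma~\ref{lem:Jn} is $O_P(|S|\,\Lambda_{|S|}^2 \log p)$, not $O_P(|S|\log p)$; when $\Lambda$ is allowed to grow (Condition~\ref{cond:design} with $\tau > 0$, e.g., the Poisson model), your ledger comparing the signal term $c\lambda\, k\, s^\star \Lambda_{cs^\star}\log p$ against this cost, the prior ratio, and the union-bound count must track these $\Lambda$ factors carefully, and as sketched it does not close for a fixed constant $c$.
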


The condition \eqref{eq:beta.min} is exactly the same as in \cite{narisetty2018skinny} and \cite{cao2020variable}. It is also equivalent to the beta-min condition for lasso variable selection consistency when $w=0$, and is slightly stronger when $w>0$.

\subsection{Posterior distribution approximation}

Aside from knowing that the posterior distribution for $\theta$ concentrates around the true $\theta^\star$ sufficiently fast, there is interest in knowing the approximate form or shape of that limiting posterior.  In particular, this helps to demonstrate that uncertainty quantification about $\theta$ (e.g., credible sets) derived from the posterior distribution is at least asymptotically valid in a frequentist sense.  Along these lines, under the same conditions as in the previous theorems, Theorem~\ref{thm:bvm} below establishes a {\em Bernstein--von Mises} result comparable to that in Corollary~2 of \cite{castillo.schmidt.vaart.reg} for a high-dimensional linear regression model.  To our knowledge, this is the first posterior normality result in the literature on sparse, high-dimensional GLMs. 

Towards this, define the Gaussian approximation 
\begin{equation}
\label{eq:gaussian.limit}
\Psi^n(d\theta) = \nm_{|S^\star|}\bigl( d\theta_{S^\star} \mid \hat\theta_{S^\star}, \varrho \, J_n(S^\star, \hat\theta_{S^\star})^{-1} \bigr) \otimes \delta_{0^{S^{\star c}}}(d\theta_{S^{\star c}}), 
\end{equation}
where the variance multiplier $\varrho$ depends only on the inputs $(\alpha,\gamma)$ as follows: 
\[ \varrho = \gamma (1 + \alpha\gamma)^{-1}. \]
This is effectively the Gaussian posterior approximation that the data analyst would use if he/she {\em knew} the configuration $S^\star$ and, in particular, the marginal posterior credible intervals for $\theta_j$, with $j \in S^\star$, would virtually agree with the classical likelihood-based confidence intervals returned from, say, R's {\tt glm} function for large $n$; more on these points below. Theorem~\ref{thm:bvm} below states that our proposed posterior distribution $\Pi^n$ asymptotically resembles the near-oracle Gaussian approximation \eqref{eq:gaussian.limit} in the total variation sense. 

\begin{thm}
\label{thm:bvm}
Under Conditions~\ref{cond:dim}--\ref{cond:prior}, for $\Psi^n$ as in \eqref{eq:gaussian.limit}, 
\[ \E_{\theta^\star} \, d_\text{\sc tv}\bigl( \Pi^n, \Psi^n \bigr) \to 0, \]
for any $\theta^\star$ such that \eqref{eq:beta.min} holds and 
\begin{equation}
\label{eq:small.model}
n^{-1} |S(\theta^\star)|^3 \Lambda_{|S(\theta^\star)|} \log p \to 0. 
\end{equation}
\end{thm}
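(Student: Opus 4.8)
The plan is to exploit the fact that $\Pi^n$ is a mixture over configurations and reduce the claim to two pieces: the posterior mass on configurations other than $S^\star$, and a \emph{local} Bernstein--von Mises statement for the single $S^\star$-component. Writing $\Pi^n_S$ for the conditional posterior on configuration $S$ (embedded in $\RR^p$ with a point mass at the origin off $S$), we have $\Pi^n = \sum_S \pi^n(S)\,\Pi^n_S$, and since the weights sum to one, for any event $A$ one gets $\Pi^n(A)-\Psi^n(A) = \sum_S \pi^n(S)\{\Pi^n_S(A)-\Psi^n(A)\}$. Taking suprema over $A$ and bounding each $d_\text{\sc tv}\le 1$ in the off-$S^\star$ sum gives
\begin{align*}
d_\text{\sc tv}(\Pi^n,\Psi^n)
&\le \pi^n(S^\star)\,d_\text{\sc tv}(\Pi^n_{S^\star},\Psi^n) + \sum_{S\ne S^\star}\pi^n(S)\,d_\text{\sc tv}(\Pi^n_S,\Psi^n) \\
&\le d_\text{\sc tv}(\Pi^n_{S^\star},\Psi^n) + \{1-\pi^n(S^\star)\}.
\end{align*}
The beta-min condition \eqref{eq:beta.min} lets me invoke Theorem~\ref{thm:consistent}, so $1-\pi^n(S^\star)\to 0$ in $\prob_{\theta^\star}$-probability; being bounded by $1$, its expectation vanishes by dominated convergence. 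Everything therefore comes down to showing $\E_{\theta^\star}\,d_\text{\sc tv}(\Pi^n_{S^\star},\Psi^n)\to 0$.

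\textbf{The local approximation and the origin of $\varrho$.} Next I would establish the local approximation by a quadratic (Laplace-type) expansion of the log-likelihood about the MLE. Passing to the rescaled coordinate $h = J_n(S^\star,\hat\theta_{S^\star})^{1/2}(\theta_{S^\star}-\hat\theta_{S^\star})$ and using $\dot\ell_n(\hat\theta_{S^\star})=0$ together with $J_n=-\ddot\ell_n$, Taylor's theorem gives $\ell_n(S^\star,\theta_{S^\star})-\ell_n(S^\star,\hat\theta_{S^\star}) = -\tfrac12\|h\|^2 + R_n(h)$, with $R_n$ collecting the third- and higher-order terms. The $\alpha$-power likelihood times the Gaussian prior \eqref{eq:conditional.prior} then has an $h$-density proportional to $\exp\{-\tfrac12(\alpha+\gamma^{-1})\|h\|^2 + \alpha R_n(h)\} = \exp\{-\tfrac{1}{2\varrho}\|h\|^2 + \alpha R_n(h)\}$, which is exactly the $\nm(0,\varrho I)$ density --- the image of $\Psi^n$ under the same rescaling --- tilted by $e^{\alpha R_n(h)}$. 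This is where $\varrho=\gamma(1+\alpha\gamma)^{-1}$ is forced. A standard lemma converting a uniformly small log-density-ratio into a total variation bound then reduces the problem to showing that $\sup_{h\in\mathcal H}|R_n(h)|$ is negligible on an ellipsoid $\mathcal H=\{\|h\|\le r_n\}$ carrying all but a vanishing fraction of both $\Pi^n_{S^\star}$- and $\Psi^n$-mass, with the tails $\Psi^n(\mathcal H^c)$ and $\Pi^n_{S^\star}(\mathcal H^c)$ handled by a chi-square tail bound and the companion concentration estimate underlying Lemma~\ref{lem:laplace}.

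\textbf{The remainder bound --- the main obstacle.} The crux is the uniform control of $R_n$, and this is where condition \eqref{eq:small.model} is consumed. Writing the GLM third-derivative tensor as $\dddot\ell_{n,jkl}(S^\star,\tilde\theta) = -\sum_{i} c(x_i^\top\tilde\theta)\,x_{ij}x_{ik}x_{il}$, I would bound $|R_n(h)| \lesssim \max_i|c(x_i^\top\tilde\theta)|\sum_i |x_i^\top(\theta_{S^\star}-\hat\theta_{S^\star})|^3$ and then control the pieces using the available structure: Condition~\ref{cond:design}(a) gives $|x_i^\top v|\le K|S^\star|^{1/2}\|v\|_2$; Condition~\ref{cond:design}(b) gives $\|v\|_2 = \|J_n^{-1/2}h\|_2 \lesssim (n\lambda)^{-1/2}\|h\|$; the radius $r_n$ is calibrated so that $\mathcal H$ has overwhelming Gaussian mass; and the factor $\max_i|c|$ is tied to the diagonal of $W(S^\star,\tilde\theta)$, hence to $\Lambda_{|S^\star|}$, through the eigenvalue structure. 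Assembling these ingredients, the exponents in $|S^\star|$ and the factor $\Lambda_{|S^\star|}\log p$ combine to deliver $\sup_{\mathcal H}|R_n|=o(1)$ precisely under \eqref{eq:small.model}. The genuine difficulty is that a crude supremum bound loses factors of $n$, so the bookkeeping must instead lean on the sharp log-likelihood-difference and Laplace-accuracy estimates (Lemmas~\ref{lem:loglik.diff} and~\ref{lem:laplace}); moreover the ellipsoid is random and centered at the random MLE, so I must first restrict to a high-$\prob_{\theta^\star}$-probability event on which $\hat\theta_{S^\star}$ is close to $\theta_{S^\star}^\star$ and $J_n(S^\star,\hat\theta_{S^\star})$ is comparable to $J_n(S^\star,\theta_{S^\star}^\star)$ (via the MLE control in Lemmas~\ref{lem:Jn}--\ref{lem:mle}), so that the intermediate Taylor point $\tilde\theta$ stays where $c(x_i^\top\tilde\theta)$ is uniformly bounded. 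Only then do the eigenvalue and sparsity bounds combine cleanly.

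\textbf{Passing to expectations.} Finally, since all the exceptional events (failure of MLE control, the configuration-selection failure of Theorem~\ref{thm:consistent}, and the tail excursions of $\|h\|$) have $\prob_{\theta^\star}$-probability tending to zero while $d_\text{\sc tv}\le 1$ throughout, I take expectations and apply dominated convergence to conclude $\E_{\theta^\star}\,d_\text{\sc tv}(\Pi^n_{S^\star},\Psi^n)\to 0$. Combined with $\E_{\theta^\star}\{1-\pi^n(S^\star)\}\to 0$ from the first step, the displayed bound yields $\E_{\theta^\star}\,d_\text{\sc tv}(\Pi^n,\Psi^n)\to 0$, as claimed.
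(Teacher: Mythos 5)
Your global architecture matches the paper's exactly: the mixture decomposition $\Pi^n = \sum_S \pi^n(S)\,\Pi^n_S \otimes \delta_{0_{S^c}}$, the reduction of the off-$S^\star$ terms to $1-\pi^n(S^\star)\to 0$ via Theorem~\ref{thm:consistent} under \eqref{eq:beta.min}, and the final passage to expectations by dominated convergence are all the same. The gap is in the step you yourself flag as the crux: the local approximation for the $S^\star$-component. You reduce it to a sup-norm bound on the Taylor remainder, $\sup_{h \in \mathcal{H}}|R_n(h)| = o(1)$, over an ellipsoid $\mathcal{H}$ carrying the Gaussian bulk, and then assert that the bookkeeping delivers this ``precisely under \eqref{eq:small.model}.'' It does not, and the loss is structural, not a matter of sharper constants. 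The dominant part of $R_n(h)$ is itself quadratic in $h$: by Lemma~\ref{lem:Jn}, the Hessian at the intermediate Taylor point differs from $J_n(S^\star,\hat\theta_{S^\star})$ by a multiplicative factor $1\pm\zeta_n$ with $\zeta_n \asymp \{n^{-1}|S^\star|^2\Lambda_{|S^\star|}\log p\}^{1/2}$, so the best uniform bound available is $\sup_{\mathcal{H}}|R_n| \asymp \zeta_n \sup_{\mathcal{H}}\|h\|^2 \asymp \zeta_n |S^\star| = \{n^{-1}|S^\star|^4\Lambda_{|S^\star|}\log p\}^{1/2}$, since any $\mathcal{H}$ capturing all but a vanishing fraction of the Gaussian mass must have radius $\|h\|^2 \gtrsim |S^\star|$. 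Requiring this to vanish is strictly stronger than \eqref{eq:small.model} by a factor of $|S^\star|$; e.g., with bounded $\Lambda$ and $p$ polynomial in $n$, \eqref{eq:small.model} permits $|S^\star|$ up to nearly $(n/\log n)^{1/3}$, while your route caps it near $n^{1/4}$. (Your explicit third-derivative accounting gives the same $n^{-1/2}|S^\star|^2$ order, so it hits the same wall; and Lemmas~\ref{lem:loglik.diff} and~\ref{lem:laplace}, which you propose to lean on, control cross-configuration likelihood differences and normalizing constants respectively --- neither supplies the missing within-configuration remainder control.)

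The paper's proof closes this gap by never treating the remainder as a generic sup-norm tilt. Instead, it uses Lemma~\ref{lem:Jn} to absorb the Taylor error \emph{multiplicatively into the covariance}: on the localization event, the conditional posterior density is sandwiched between $G_n^{\pm}\,\nm\bigl(\theta_{S^\star}\mid\hat\theta_{S^\star}, \tfrac{\gamma}{1+\alpha\gamma(1\pm\zeta_n)}J_n(S^\star,\hat\theta_{S^\star})^{-1}\bigr)$, where $G_n^{\pm}\to 1$ by Lemma~\ref{lem:laplace}. The problem then becomes the total variation distance between two Gaussians with a common mean whose covariances differ by the factor $1\pm\zeta_n$, and the sharp Gaussian--Gaussian bound (Theorem~1.8 of Arbas et al.\ 2023) gives $d_\text{\sc tv} \lesssim \zeta_n|S^\star|^{1/2} = \{n^{-1}|S^\star|^3\Lambda_{|S^\star|}\log p\}^{1/2}$, which vanishes exactly under \eqref{eq:small.model}. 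The point is that a relative perturbation of the variance in dimension $s$ costs only $\zeta_n\sqrt{s}$ in total variation, whereas your log-density-ratio reduction charges $\zeta_n s$; recovering the theorem as stated requires recognizing the quadratic structure of the remainder and exploiting it in this way, so as written your argument proves a weaker theorem.
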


Recall that Condition~\ref{cond:prior} requires that, in most cases, $\gamma=\gamma_n$ is a diverging sequence, i.e., $\gamma \to \infty$ as $n \to \infty$.  In that case, $\varrho=\varrho_n$ also depends on $n$ and satisfies $\varrho \to \alpha^{-1}$ as $n \to \infty$.  Even if $\gamma$ is allowed to be a constant, we can choose that constant to make $\varrho$ as close to $\alpha^{-1}$ as we like.  Since $\alpha \in (0,1)$ is a fixed constant that can be (and is) taken arbitrarily close to 1, the limiting multiplier $\alpha^{-1}$ need only be slightly greater than 1.  So, $\Psi^n$ in \eqref{eq:gaussian.limit} is effectively the {\em oracle} Gaussian approximation and, likewise, our posterior uncertainty quantification is asymptotically both valid and effectively efficient. 

Condition \eqref{eq:small.model} amounts to assuming that the true $\theta^\star$ is a bit lower complexity compared to what was needed for posterior concentration rates, etc.  This restriction is not too surprising, since the result is closely tied to the accuracy of Laplace's approximation, which is only expected in relatively low-complexity cases.  Indeed, \cite{shun1995laplace} show that Laplace approximations are expected to be accurate for integrals over spaces of dimension $o(n^{1/3})$, which is precisely the range of $|S(\theta^\star)|$ that would satisfy \eqref{eq:small.model}; see, also, \cite{barber2016laplace}.  The restriction \eqref{eq:small.model} can also be compared to the corresponding result in \cite{castillo.schmidt.vaart.reg}.  For one thing, in the case of a Gaussian likelihood, there is already a strong, built-in nudge towards a Gaussian posterior, so it makes sense that these authors could prove a result comparable to that in Theorem~\ref{thm:bvm} under weaker conditions.  They also adopt a ``small lambda regime''---referring to the rate parameter in their Laplace prior for non-zero coefficients---which offers additional flexibility to handle more complex configurations.  All this being said, we make no claims that \eqref{eq:small.model} is necessary for our proposed posterior to enjoy a Bernstein--von Mises theorem---improvements on some of the bounds used in our analysis may be available.

\section{Numerical results}
\label{S:examples}

\subsection{Methods and metrics}

Our focus here is on variable selection and estimation performance of our proposed empirical Bayes method compared to existing methods in two different GLM contexts.  There is a plethora of literature \citep[e.g.,][]{narisetty2018skinny, cao2020variable, bhadra2019lasso, wei2020contraction} that focuses on high-dimensional logistic regression, but a dearth of literature on Bayesian methods---with numerical implementations---that can handle GLMs besides logistic regression.  For this reason, we showcase our method's performance in both {\em logistic} regression and {\em Poisson} regression (with canonical log link). 

We start with some details about the competitor methods we consider, including lasso, adaptive lasso, SCAD, MCP, horseshoe, and skinnyGibbs. Most of these methods were implemented via R packages: lasso and adaptive lasso with \texttt{glmnet} and SCAD and MCP through \texttt{ncvreg}. No R package is available for the horseshoe in GLMs, so we relied instead on STAN. For logistic regression, the horseshoe method is coded using the \texttt{rstan} package, and hyperparameters were chosen based on recommendation of \cite{piironen2017sparsity} and \cite{bhadra2019lasso}. Specifically, we use the regularized horseshoe prior with $c^2 \sim {\sf InvGamma}(2, 8)$, and $\tau$ determined based on the number of effective nonzero coefficients. The MCMC for the horseshoe was run with two chains and 5,000 posterior draws for each chain. Posterior samples were obtained for the coefficient vector $\theta$, and since our interest is in the task of variable selection, we follow the default procedure that deems a variable as ``active'' if its 95\% posterior credible interval does not include zero. For Poisson regression, there was not as much guidance in the literature on implementation; we used the \texttt{rstanarm} package \citep{goodrich2018rstanarm}, but these results are not reported here as the horseshoe was computationally restrictive in terms of runtime when running simulations with a large number of replications. SkinnyGibbs was implemented with R package \texttt{skinnybasad}, directly obtained from the authors. Since this method was developed exclusively for variable selection in logistic regression, we do not present results for skinnyGibbs in the Poisson regression setting. 

We implement the proposed empirical Bayes solution using the Metropolis--Hastings strategy in Section~\ref{SS:computation}.  We also tried the shotgun stochastic search as discussed there, but we found that this produced similar results to Monte Carlo with no appreciable gain in computational efficiency; so we opted for the simpler of the two.  After a burn-in period in our sampling scheme, we return $M=10^4$ posterior samples of the configuration $S$, from which we can evaluate the inclusion probabilities as defined in \eqref{eq:ip}.  For a variable selection procedure based on our empirical Bayes solution, we propose 
\[ \hat S = \hat S(t) = \{j: \pi^n(j) > t\}, \]
where $\pi^n(j)$ is the (Monte Carlo approximation of) inclusion probability in \eqref{eq:ip} and $t \in (0,1)$ is a user-specified threshold.  We investigate the performance of our proposed method with two different choices of the threshold $t$, namely, $t=0.1$ and $t=0.5$; we refer to these below as EB1 and EB2, respectively.  The latter threshold 0.5 corresponds to selecting the so-called ``median probability model'' advocated for in, e.g., \cite{barbieri.berger.2004}.  The former threshold 0.1 is designed to select a larger subset of variables and is motivated by Bayesian decision-theoretic considerations where a Type~II error (missing an important variable) is taken to be 9 times as costly as a Type~I error.

For comparing the performance of the different variable selection methods, we report three different metrics: sensitivity (TPR), specificity (TNR), and Matthew's correlation coefficient (MCC). Sensitivity, or true positive rate, allows us to see how well the method does in identifying true signals or genuinely non-zero coefficients; specificity, or true negative rate, shows the method's ability to correctly identify the noise or genuinely zero coefficients; and MCC looks at all the four categories of the confusion matrix and combines them into one single metric. An MCC of 1 means the method perfectly distinguished signal from noise, while an MCC of 0 means the signal discovery was no better than random guessing. These metrics are defined as 
\begin{align*}
\text{TPR} & = \frac{\text{TP}}{\text{TP}+\text{FN}} \qquad \qquad \text{TNR} = \frac{\text{TN}}{\text{TN}+\text{FP}} \\
\text{MCC} & = \frac{\text{TP}\times \text{TN}- \text{FP}\times \text{FN}}{\{ (\text{TP}+\text{FP})(\text{TP}+\text{FN})(\text{TN}+\text{FP})(\text{TN}+\text{FN})\}^{1/2}},
\end{align*}
where, TP, TN, FP, and FN are the number of true positives, true negatives, false positives, and false negatives, respectively, and for MCC, we adopt the convention $0/0 \equiv 0$.  The relevance of these metrics depends on the data analyst's priorities.  For example, if Type~II errors are more costly than Type~I, then TPR would be the relevant metric.  On the other hand, if the two errors cost roughly the same, then MCC would be a more relevant metric.  MCC has been shown to be more reliable than other measures that are commonly used, including area under the receiver operating characteristic curve, balanced accuracy, bookmaker informedness, and markedness \citep{chicco2023matthews}.

%\citep{chicco2021matthews, chicco2023matthews}

For logistic regression, we focused on variable selection since skinnyGibbs, the most competitive method in the Bayesian landscape, is designed only for logistic regression variable selection. For Poisson regression, in addition to variable selection, we also compare estimation performance of our method to the others (lasso, adaptive lasso, SCAD, and MCP). Neither horseshoe nor skinnyGibbs are included in the Poisson regression comparisons---horseshoe is too restrictive in terms of runtime and no version of skinnyGibbs is currently available for GLMs besides logistic regression. For estimation, the metric we consider is mean squared error (MSE).

\subsection{Simulation studies}

\subsubsection{Logistic regression}

We fixed the sample size at $n=100$ and considered two different values of $p$, namely, $p=200$ and $p=400$.  The true coefficient vector $\theta^\star$ is set to have its first $s$ many components equal to 3 and the rest set to 0; here the cardinality $s$ can take two values, $s=4$ and $s=8$.  The rows of the design matrix $X$ are randomly simulated from a multivariate normal with mean 0, variance 1, and covariance matrix $\Sigma$, where $\Sigma_{ij}=r^{|i-j|}$ corresponds to a first-order autoregressive correlation structure. The correlation parameter $r$ takes values $r=0$ and $r=0.2$. The response variables $y_1,\ldots,y_n$ are generated independently, where $y_i$ has a Bernoulli distribution with success probability $\exp(x_i^\top\theta^\star)/\{1+\exp(x_i^\top\theta^\star)\}$, for $i=1,\ldots,n$.  The settings are determined by the triple $(p,s,r)$, so there are altogether eight simulation settings. There are 500 replications performed at each of the eight settings, and the variable selection results for the methods and metrics described above are summarized in Table~\ref{table:logistic}. 

\begin{table}[t]
\begin{center}  
\begin{tabular}{cccccccccccc}
    \hline
    $p$ & $|S|$ & $r$ & Metric & EB1 & EB2 & HS & lasso & alasso & SCAD & MCP & skinny\\
     \hline
         \multirow{3}{*}{200} &  \multirow{3}{*}{4} & \multirow{3}{*}{0} & TPR & 0.980 & 0.966 &	0.871& 0.998	&	0.701&	1.000	&0.999 & 0.997\\
         & & &  TNR &0.971& 0.992 &1.000&	0.953&	0.854 &0.962 &	0.986 &0.999 \\
          & & & MCC & 0.676	&0.859&	0.927	&0.630	&0.228	&0.597 &0.783 &0.980 \\[3pt]
%    \hline
             \multirow{3}{*}{200} &  \multirow{3}{*}{4} & \multirow{3}{*}{0.2}& TPR &  0.962 & 0.926 &	0.786&	0.995 &    0.806&	0.999 & 0.996 & 0.985 \\
         &  &  &TNR & 0.975& 0.994  &	1.000&	0.966 &0.847& 0.955& 0.983 & 0.999  \\
          &  & & MCC &  0.691 &0.856 &	0.878&	0.708&  0.272& 0.561& 0.744& 0.973 \\[3pt]
%    \hline
             \multirow{3}{*}{200} &  \multirow{3}{*}{8} & \multirow{3}{*}{0}& TPR &  0.746 & 0.643 &	0.276&	0.959 & 0.579&	0.962&	0.925 & 0.813 \\
         &  &  &TNR & 0.940	&0.983&	1.000&	0.895&	0.851&	0.948&	0.982 & 0.998 \\
          &  & & MCC &  0.519 & 0.618&	0.498&	0.528 &  0.206& 0.640& 0.790& 0.865 \\[3pt]
%    \hline
             \multirow{3}{*}{200} &  \multirow{3}{*}{8} & \multirow{3}{*}{0.2}& TPR &  0.747& 0.648 &0.287	&0.953&	0.705&	0.951&	0.888 &0.714 \\
         &  &  &TNR & 0.956&0.988&	1.000&	0.932& 0.837& 0.952& 0.984&0.998 \\
          &  & & MCC & 0.570& 0.663& 0.514& 0.631&  0.268&0.644 &0.780& 0.806 \\[3pt]
%    \hline
             \multirow{3}{*}{400} &  \multirow{3}{*}{4} & \multirow{3}{*}{0}& TPR & 0.922&	0.897& 0.582&	0.993& 0.741& 1.000&	1.000 &0.980\\
         &  &  &TNR & 0.989&	0.996&	1.000	&0.973&	0.924	&0.974&	0.991 & 0.998\\
          &  & & MCC &  0.728	&0.841&	0.728	&0.618&	0.265&0.543&	0.744 & 0.920 \\[3pt]
%    \hline
         \multirow{3}{*}{400} &  \multirow{3}{*}{4} & \multirow{3}{*}{0.2} & TPR & 0.926&0.860	&0.568	&0.994&	0.836&	0.998	&0.995 & 0.952\\
         & & &  TNR & 0.993	&0.998	&1.000&	0.980&	0.931	&0.971	&0.990 &0.998 \\
          & & & MCC & 0.795&	0.860&	0.737	&0.685	&0.336	&0.514	&0.718 & 0.899\\[3pt]
%    \hline
             \multirow{3}{*}{400} &  \multirow{3}{*}{8} & \multirow{3}{*}{0}& TPR &  0.490&	0.407&	0.057	&0.896&	0.468	&0.917	&0.843 & 0.498 \\
         &  &  &TNR & 0.974&	0.987&	1.000	&0.941	&0.930	&0.962	&0.987 & 0.996 \\
          &  & & MCC & 0.406	&0.401	&0.145&	0.493&	0.196&	0.546	&0.694 & 0.591 \\[3pt]
%    \hline
             \multirow{3}{*}{400} &  \multirow{3}{*}{8} & \multirow{3}{*}{0.2}& TPR &  0.548&	0.438&	0.092&	0.931	&0.617&	0.922	&0.842 & 0.565 \\
         &  &  &TNR & 0.983& 0.992 &1.000&	0.958	&0.921	&0.965&	0.989 &0.996 \\
          &  & & MCC &  0.513&	0.501	&0.226&	0.580	&0.260	&0.564&	0.714 &0.646 \\
    \hline
\end{tabular}
\end{center}
\caption{Comparison of TPR, TNR, and MCC for the two EB methods with different cutoffs and the six other methods across various settings in logistic regression.}
\label{table:logistic}
\end{table}

Our results showed that our EB method performs comparably to the other methods.  First, if the data analyst places higher priority on correctly identifying the active variables, then TPR would be his/her preferred metric.  In this case, EB1---with a smaller cutoff $t=0.1$, consistent with the higher priority on finding active variables---performs fairly well in terms of TPR compared to the other five methods. Lasso has the highest TPR in two of the eight settings but, as is common, it tends to over-select as evidenced by its low TNR.  SCAD has the highest TPR in the other six settingss. Horseshoe has the lowest TPR in all but one of the settings. This is because the standard/default strategy recommends using 95\% credible intervals, which is too conservative; a lower credibility level should be used if a less conservative selection procedure is desired.  Second, if the data analyst's priorities are more balanced, i.e., aiming for parsimonious models with good overall performance, then MCC would be the go-to metric and he/she would prefer the more balanced EB2 with a larger cutoff $t=0.5$.  Here, EB2 has an MCC comparable with the other methods. Adaptive lasso has the lowest MCC in six of the settings, whereas SkinnyGibbs has the highest across all eight settings.  This is not unexpected, given that skinnyGibbs is designed specifically for variable selection in logistic regression. 

In terms of runtime, we compared the methods by fixing $n=100$, $|S|=4$, and $r=0$, but varying $p \in \{200, 300, 400, 500, 600, 700, 800\}$. Each method was run 5 times to generate an average runtime. Figure~\ref{fig:logistictime} plots the runtime of our EB method and the skinnyGibbs method as $p$ increases.  This comparison might be rather striking, so some remarks are in order.  It is not possible for a Markov chain to explore the $S$-space thoroughly in an acceptable amount of time when $p$ is even moderately large.  Our proposed method is not designed to thoroughly explore the $S$-space; instead, the goal is to do a careful-but-admittedly-incomplete exploration focused on the high-probability configurations so that it can provide reliable variable selection. As our results show, apparently this latter goal can be accomplished competitively with far shorter runtimes.

\begin{figure}[t]
\begin{center}\scalebox{0.7}{\includegraphics{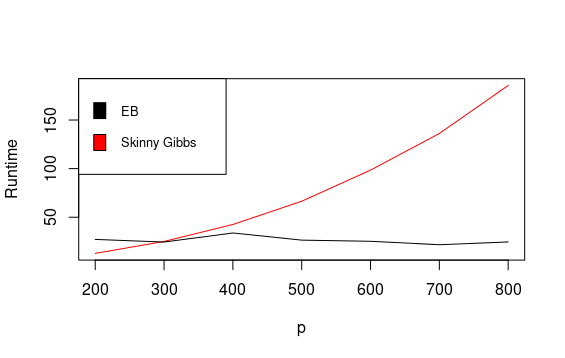}}
\end{center}
\caption{A comparison of the runtimes (in seconds) of EB and skinnyGibbs with fixed $n=100, |S|=4, r=0$ and varying $p=\{200, 300, 400, 500, 600, 700, 800\}$.}
\label{fig:logistictime}
\end{figure}

\subsubsection{Poisson regression}

The data $X$ is generated the same way as in the logistic regression settings, with one minor change---the common standard deviation across the rows of $X$ is set at $0.3$ instead of 1. This is to ensure that the Poisson variables generated are not exponentially large.  The response variables $y_1,\ldots,y_n$ are generated independently, with $y_i$ having a Poisson distribution with rate $\exp(x_i^\top\theta^\star)$. All other settings remain the same as in the logistic regression simulations above, with the same $(p,r,s)$ combinations. The Poisson regression simulation results for variable selection are summarized in Table~\ref{table:poisson},  where 100 replications were run at each simulation setting.

\begin{table}[t]
\begin{tabular}{@{}cccccccccc@{}}
\hline
%    \toprule
    $p$ & $|S|$ &  $r$ & Metric & EB1 & EB2 & lasso & alasso & SCAD & MCP\\
%     \midrule
\hline
         \multirow{3}{*}{200} &  \multirow{3}{*}{4} & \multirow{3}{*}{0} & TPR & 1.000	&1.000	&1.000&	0.998	&0.975&	0.950 \\
         & & &  TNR &0.999&	1.000&	0.893&	0.949&	0.983	&0.987 \\
          & & & MCC & 0.973&	0.998&	0.395	&0.547	&0.879	&0.906 \\[3pt]
%    \hline
             \multirow{3}{*}{200} &  \multirow{3}{*}{4} & \multirow{3}{*}{0.2}& TPR &  1.000&	1.000	&1.000	&1.000&	0.973	&0.903 \\
         &  &  &TNR & 1.000	&1.000&	0.907&	0.964&	0.995&	0.998 \\
          &  & & MCC &  0.992&	0.999&	0.425&	0.620&	0.905	&0.893 \\[3pt]
%    \hline
             \multirow{3}{*}{200} &  \multirow{3}{*}{8} & \multirow{3}{*}{0}& TPR &  1.000&	1.000	&0.979	&0.983&	0.598&	0.494 \\
         &  &  &TNR & 0.997&	1.000&	0.856&	0.915	&0.981	&0.989 \\
          &  & & MCC & 0.969	&0.998&	0.437	&0.555&	0.549	&0.514 \\[3pt]
%    \hline
             \multirow{3}{*}{200} &  \multirow{3}{*}{8} & \multirow{3}{*}{0.2}& TPR &  1.000	&1.000&0.983	&0.958	&0.514&	0.421 \\
         &  &  &TNR & 0.998	&1.000&	0.898&	0.946	&0.983&	0.988 \\
          &  & & MCC &  0.980&	0.998 &0.508&	0.625&	0.512	&0.470 \\[3pt]
%    \hline
             \multirow{3}{*}{400} &  \multirow{3}{*}{4} & \multirow{3}{*}{0}& TPR & 1.000	&1.000&	1.000&	1.000&	0.990&	0.975 \\
         &  &  &TNR & 1.000&	1.000&	0.926&	0.955	&0.995&	0.999\\
          &  & & MCC &  0.991&	1.000&	0.350	&0.445	&0.868&	0.936 \\[3pt]
%    \hline
         \multirow{3}{*}{400} &  \multirow{3}{*}{4} & \multirow{3}{*}{0.2} & TPR & 1.000&	1.000&	1.000&	0.998	&0.945	&0.863\\
         & & &  TNR &1.000&	1.000&	0.940&	0.971	&0.977&	0.978 \\
          & & & MCC & 0.994&	0.999&	0.392&	0.533&	0.863&	0.850\\[3pt]
%    \hline
             \multirow{3}{*}{400} &  \multirow{3}{*}{8} & \multirow{3}{*}{0}& TPR &  1.000	&1.000&0.979	&0.971&	0.494&	0.299 \\
         &  &  &TNR & 0.999	&1.000	&0.914&	0.942	&0.986&	0.992 \\
          &  & & MCC &0.985	&1.000		&0.417&	0.499&	0.434	&0.322 \\[3pt]
%    \hline
             \multirow{3}{*}{400} &  \multirow{3}{*}{8} & \multirow{3}{*}{0.2}& TPR &  1.000	&1.000&	0.960	&0.946&	0.415	&0.318 \\
         &  &  &TNR & 0.998&	1.000&	0.936	&0.960	&0.989	&0.993 \\
          &  & & MCC & 0.973&	0.999&	0.465	&0.552	&0.401	&0.357 \\
\hline
\end{tabular}
\caption{Comparison of TPR, TNR, and MCC for the two EB methods with different cutoffs and the four other methods across various settings in Poisson regression.}
\label{table:poisson}
\end{table}

We see that EB does very well compared to the other methods, and even better comparatively than in the logistic regression settings. EB2 has the highest MCC value in all eight configurations, and in fact, both EB1 and EB2 perform better than all other methods across all the settings for all three metrics. Comparing the methods, SCAD and MCP are not as competitive in Poisson regression as in logistic regression. 

For estimation, we use the same simulation settings as above, and compare the (empirical) mean squared error (MSE) values, $\E_{\theta^\star}\|\hat\theta - \theta^\star\|^2$, for the various methods. The results summarized in Table~\ref{table:est} show that our proposed method is an incredibly strong performer across all the settings. 

\begin{table}[t]
\begin{tabular}{@{}cccccccc@{}}
\hline
%    \toprule
    $p$ & $|S|$ &  $r$ & EB & lasso & alasso & SCAD & MCP\\
%     \midrule
\hline
         \multirow{1}{*}{200} &  \multirow{1}{*}{4} & \multirow{1}{*}{0} & 0.117	& 2.642	& 1.365& 2.064	&2.375 \\[3pt]
%    \hline
             \multirow{1}{*}{200} &  \multirow{1}{*}{4} & \multirow{1}{*}{0.2} &  0.094&	1.677	&0.813	&6.536& 7.714 \\[3pt]
%    \hline
             \multirow{1}{*}{200} &  \multirow{1}{*}{8} & \multirow{1}{*}{0} &  0.196& 10.143 & 9.906	&43.698& 48.180\\[3pt]
%    \hline
             \multirow{1}{*}{200} &  \multirow{1}{*}{8} & \multirow{1}{*}{0.2}&0.469&	16.201	&17.839	&56.941& 61.823 \\[3pt]
%    \hline
             \multirow{1}{*}{400} &  \multirow{1}{*}{4} & \multirow{1}{*}{0}&0.207&	3.925&	2.105&	2.184&	3.195 \\[3pt]
%    \hline
         \multirow{1}{*}{400} &  \multirow{1}{*}{4} & \multirow{1}{*}{0.2} & 0.100&	2.177&	1.176	&9.638	&10.754\\[3pt]
%    \hline
             \multirow{1}{*}{400} &  \multirow{1}{*}{8} & \multirow{1}{*}{0} & 4.691& 16.920& 14.248 & 59.989& 59.021 \\[3pt]
%    \hline
             \multirow{1}{*}{400} &  \multirow{1}{*}{8} & \multirow{1}{*}{0.2}&7.419& 17.451 &14.531& 60.560& 64.833 \\
\hline
\end{tabular}
\caption{Comparison of mean squared error (MSE) values for the EB method and the four other methods across various settings in Poisson regression.}
\label{table:est}
\end{table}

% \begin{figure}[t]
% \begin{center}\scalebox{0.5}{\includegraphics{EJS/mse.boxplots.png}}
% \end{center}
% \caption{A comparison of the mean-squared error values of EB, lasso, adaptive lasso, SCAD, and MCP across replications of the eight simulation settings.}
% \label{fig:poissonmse}
% \end{figure}

\section{Conclusion}
\label{S:discuss}

In this paper, we extend the empirical or data-driven prior specification strategy first proposed by \cite{martin.mess.walker.eb} to the case of high-dimensional GLMs and investigate its theoretical and practical performance.  In particular, we show that the proposed solution is ideal in the sense that it balances the strong theoretical performance that is necessary to justify its use in applications with the computational simplicity and efficiency necessary to be applicable in these problems.  The balance comes from the data-driven prior centering: we enjoy the computational advantage of a relatively simple, thin-tailed prior without subjecting ourselves to the theoretical sub-optimality that results from thin-tailed priors with fixed centers.  Compared to the linear models previously investigated, a challenge here in the GLM context is that there is no conjugacy in the prior and, therefore, no closed-form expressions for any of the posterior features.  These challenges affect both the theory and computation, but we have used some new techniques to successfully overcome them here in this paper.  While there are other methods available in the literature that have good empirical performance, and others that have powerful theoretical results, our contribution here is unique in the sense that our solution achieves both.  The solution is general and can be applied beyond logistic regression setting.  

Our numerical investigations in this paper focused on variable selection and point estimation, but the method itself is capable of answering other questions.  In a follow-up work it would be interesting to explore the performance of the proposed method in the context of uncertainty quantification about the coefficient vector $\theta$.  In the high-dimensional linear model setting, work has been done in \cite{martin.tang.jmlr} to look at how the method does in prediction---similar work with a focus on point prediction and uncertainty quantification of prediction can be carried out in this high-dimensional GLM context.  On the theoretical side, there are some new techniques employed here in the proofs, namely, relying on in-probability bounds as opposed to bounds in expectation.  The latter have their advantages, but the former are more flexible.  We expect that this added flexibility would be useful in other cases where, as is common, the priors would not be exactly conjugate.

%%%%%%%%%%%%%%%%%%%%%%%%%%%%%%%%%%%%%%%%%%%%%%
%% Example with multiple Appendixes:        %%
%%%%%%%%%%%%%%%%%%%%%%%%%%%%%%%%%%%%%%%%%%%%%%
\begin{appendix}
\section{Technical preliminaries}
\label{A:tech.prelim}

\subsection{Likelihood-related properties}
\label{AA:likelihood}

First, we present here two key results from \cite{narisetty2018skinny}, namely, Lemmas~A1 and A3, respectively, both concerning asymptotic properties of the likelihood function and MLEs in the specific case of high-dimensional logistic regression with Bernoulli response variables.  Lemmas~\ref{lem:Jn} and \ref{lem:mle} suitably generalize these two results. 

It is not enough for us to focus on the true $\theta^\star$ exclusively, so the results presented below cover general configurations $S$ that may or may not be supersets of $S(\theta^\star)$, i.e., where $\theta_S^\dagger$ is needed in place of $\theta_S^\star$.  Fortunately, the arguments \cite{narisetty2018skinny} used to prove their Lemmas~A1 and A3 go through almost word-for-word when replacing $\theta_S^\star$ with $\theta_S^\dagger$ where appropriate.  Finally, none of the considerations that follow make sense if $S = \varnothing$, e.g., if there is no parameter, then it does not make sense to ask about properties of the information matrix or about consistency of the MLE.  This detail is not relevant when only considering $S$ that are supersets of $S^\star$, but our analysis requires consideration of general $S$.  So, without loss of generality, wherever relevant, we restrict attention to $S \neq \varnothing$, so $|S| \geq 1$. 

The first result establishes an important continuity property for the observed information matrix. 

\begin{lem}
\label{lem:Jn}
Under Conditions~\ref{cond:dim}--\ref{cond:design}, for any fixed constant $c > 0$, there exists $\zeta_n \to 0$ such that 
\[ (1 - \zeta_n) \, J_n(S,\theta_S) \leq J_n(S,\theta_S^\dagger) \leq (1 + \zeta_n) \, J_n(S,\theta_S), \]
for any $(S,\theta_S)$ such that $|S| \, \|\theta_S - \theta_S^\dagger\|^2 = o(1)$.  
%for any $S$ with $1 \leq |S| \leq s_n$ and any $\theta_S$ with $\|\theta_S - \theta_S^\dagger\|^2 \leq c n^{-1} |S| \Lambda_{|S|} \log p$.
\end{lem}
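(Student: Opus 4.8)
The plan is to reduce the Loewner comparison of the two information matrices to a scalar, multiplicative comparison of their diagonal weights, and then to exploit the closeness of the linear predictors $x_{i,S}^\top\theta_S$ and $x_{i,S}^\top\theta_S^\dagger$ guaranteed by the hypothesis $|S|\,\|\theta_S-\theta_S^\dagger\|^2 = o(1)$, where $x_{i,S}$ is the subvector of $x_i$ indexed by $S$. Writing $J_n(S,\theta_S) = X_S^\top W(S,\theta_S)X_S$, with $W(S,\theta_S)$ diagonal with $i$th entry $w(x_{i,S}^\top\theta_S)$, and similarly at $\theta_S^\dagger$, the first observation is that it suffices to establish the entrywise bound
\[ (1-\zeta_n)\,w(x_{i,S}^\top\theta_S) \le w(x_{i,S}^\top\theta_S^\dagger) \le (1+\zeta_n)\,w(x_{i,S}^\top\theta_S), \quad i=1,\ldots,n. \]
For diagonal matrices this is precisely the Loewner inequality $(1-\zeta_n)W(S,\theta_S) \preceq W(S,\theta_S^\dagger) \preceq (1+\zeta_n)W(S,\theta_S)$, and conjugating by $X_S$ preserves the ordering, which yields the stated matrix inequality at once.

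To establish the scalar bound I would first control the difference of linear predictors uniformly in $i$. By Cauchy--Schwarz, the entrywise bound $|X_{ij}|\le K$ from Condition~\ref{cond:design}(a), and the hypothesis,
\[ \max_{1\le i\le n}\bigl|x_{i,S}^\top(\theta_S-\theta_S^\dagger)\bigr| \le K\sqrt{|S|}\,\|\theta_S-\theta_S^\dagger\| = K\bigl(|S|\,\|\theta_S-\theta_S^\dagger\|^2\bigr)^{1/2} = o(1), \]
using $\|x_{i,S}\|\le K\sqrt{|S|}$. Thus the arguments fed into $w$ at $\theta_S$ and at $\theta_S^\dagger$ differ by a vanishing amount, uniformly over the $n$ observations.

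The final step converts this additive closeness of the arguments into the desired multiplicative closeness of the weights. The key structural fact is that $\eta\mapsto\log w(\eta)$ is Lipschitz with a constant $L$ bounded over the relevant range of linear predictors, whence
\[ \bigl|\log w(x_{i,S}^\top\theta_S^\dagger) - \log w(x_{i,S}^\top\theta_S)\bigr| \le L\,\bigl|x_{i,S}^\top(\theta_S-\theta_S^\dagger)\bigr| = o(1), \]
so that $w(x_{i,S}^\top\theta_S^\dagger)/w(x_{i,S}^\top\theta_S) = 1 + o(1)$ uniformly in $i$; one may then take $\zeta_n := \exp\{L\max_i|x_{i,S}^\top(\theta_S-\theta_S^\dagger)|\}-1 \to 0$. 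I expect this to be the main obstacle in the general GLM setting: one must verify that $\dot w/w$ is bounded over the range of the $x_{i,S}^\top\theta_S$, which requires the linear predictors to stay controlled, and this is precisely where the upper eigenvalue bound in Condition~\ref{cond:design}(b)---and the implicit control it places on $\|X_S\theta_S^\dagger\|_\infty$---enters, and where comparing to $\theta_S^\dagger$ rather than an arbitrary point matters. In the canonical logistic and Poisson cases the difficulty evaporates, since $\tfrac{d}{d\eta}\log w(\eta)$ equals $1-2\,\mathrm{logit}^{-1}(\eta)\in[-1,1]$ and $1$, respectively, so $L$ is an absolute constant and the range of the linear predictors is irrelevant.
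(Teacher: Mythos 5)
Your proposal is correct and follows essentially the same route the paper relies on: the paper does not spell out its own proof but defers to Lemma~A1 of \citet{narisetty2018skinny}, whose argument is exactly your reduction to the diagonal weights, the bound $\max_i|x_{i,S}^\top(\theta_S-\theta_S^\dagger)|\le K\sqrt{|S|}\,\|\theta_S-\theta_S^\dagger\|$ via Condition~\ref{cond:design}(a), and the Lipschitz-in-$\log w$ step (with $L=1$ in the logistic case), followed by conjugation with $X_S$ to transfer the diagonal Loewner bounds to $J_n$. Your choice $\zeta_n = \exp\{L\max_i|x_{i,S}^\top(\theta_S-\theta_S^\dagger)|\}-1$ also reproduces the order $\zeta_n \sim \{n^{-1}|S|^2\Lambda_{|S|}\log p\}^{1/2}$ quoted immediately after the lemma, and your caveat about needing $\dot w/w$ bounded on the range of the linear predictors is precisely the GLM-specific regularity the paper leaves implicit.
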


Note, for a given $S$, the corresponding $\zeta_n = \zeta_{n,S}$ sequence is of the order 
\[ \zeta_n \sim \{ n^{-1} |S|^2 \Lambda_{|S|} \log p\}^{1/2}, \]
so the choice of $\zeta_n$ that holds uniformly over all the sufficiently low-complexity configurations is of the order $\{ n^{-1} s_n^2 \Lambda_{s_n} \log p\}^{1/2}$.  

The next result is a convergence rate for the MLE uniformly over configurations that are not ``too complex.'' This corresponds to Lemma~A3 in \cite{narisetty2018skinny} covering the case where the GLM's score function has sub-Gaussian tails.  The critical sub-exponential case is covered in \citet{lee.chae.glm}, based on key results in \citet{spokoiny2017}.

\begin{lem}
\label{lem:mle}
Under Conditions~\ref{cond:dim}--\ref{cond:design}, 
\[ \max_{S: |S|=s} \| \hat\theta_S - \theta_S^\dagger \|^2 = O_P(n^{-1} s \Lambda_s \log p), \quad n \to \infty, \]
uniformly over all $s$ with $1 \leq s \leq s_n$
\end{lem}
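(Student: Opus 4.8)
The plan is to treat $\hat\theta_S$ as an M-estimator and reduce the claim to a stochastic score bound plus control of the local curvature. First I would record that $\hat\theta_S$ solves $\dot\ell_n(S,\hat\theta_S)=0$ while $\theta_S^\dagger$ solves $\E_{\theta^\star}\dot\ell_n(S,\theta_S^\dagger)=0$ (this is exactly the defining equation of $\theta_S^\dagger$ quoted before Lemma~\ref{lem:Jn}, since $\E_{\theta^\star}(y)=\dot b(X\theta^\star)$). A first-order Taylor expansion of the score with integral remainder then gives $\hat\theta_S-\theta_S^\dagger=\bar J_n^{-1}\dot\ell_n(S,\theta_S^\dagger)$, where $\bar J_n=\int_0^1 J_n\bigl(S,\theta_S^\dagger+t(\hat\theta_S-\theta_S^\dagger)\bigr)\,dt$ is the averaged information along the segment (using $-\ddot\ell_n=J_n$). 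Hence
\[ \|\hat\theta_S-\theta_S^\dagger\|^2\le\lambda_\text{min}(\bar J_n)^{-2}\,\|\dot\ell_n(S,\theta_S^\dagger)\|^2, \]
and the problem splits into (a) a lower bound of order $n$ on $\lambda_\text{min}(\bar J_n)$ and (b) an $O_P(n\,s\,\Lambda_s\log p)$ upper bound on the squared score norm, both uniform over $|S|=s\le s_n$.

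For (b), the main stochastic input, I would first show that the bias part of the score cancels: writing $y-\dot b(X_S\theta_S^\dagger)=\{y-\dot b(X\theta^\star)\}+\{\dot b(X\theta^\star)-\dot b(X_S\theta_S^\dagger)\}$ and hitting it with $X_S^\top\text{diag}\{\dot\xi(X_S\theta_S^\dagger)\}$, the second bracket vanishes by the defining equation for $\theta_S^\dagger$, leaving the purely stochastic term $\dot\ell_n(S,\theta_S^\dagger)=X_S^\top\text{diag}\{\dot\xi(X_S\theta_S^\dagger)\}\,\eps$ with the mean-zero vector $\eps=y-\dot b(X\theta^\star)$, whose coordinates are independent with bounded variances $\V_{\theta^\star}(y_i)$. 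Thus $\|\dot\ell_n(S,\theta_S^\dagger)\|^2$ is a quadratic form in the (here sub-Gaussian) coordinates of $\eps$, whose mean is the trace of $X_S^\top\text{diag}\{\dot\xi(X_S\theta_S^\dagger)\}^2\,\text{diag}\{\V_{\theta^\star}(y_i)\}X_S$, bounded under Condition~\ref{cond:design} by a constant multiple of $s\,\lambda_\text{max}\{J_n(S,\theta_S^\dagger)\}\le s\,\Lambda_s\,n$. A Hanson--Wright / sub-exponential concentration bound then controls its deviation from this mean.

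The hard part will be making (b) uniform over all $\binom{p}{s}\le e^{s\log p}$ configurations of a given size, and then over all $s\le s_n$. I would calibrate the Hanson--Wright deviation level to a large multiple of $s\log p$, so that the per-$S$ tail probability is $e^{-c\,s\log p}=p^{-cs}$, which is summable against the $e^{s\log p}$ configurations; this is precisely where the extra $\log p$ factor in the stated rate is produced. Summing the resulting failure probabilities over $s=1,\dots,s_n$ gives a convergent geometric sum $\sum_s p^{(1-c)s}\to0$ whenever $c>1$, so that $\max_{|S|=s}\|\dot\ell_n(S,\theta_S^\dagger)\|^2=O_P(n\,s\,\Lambda_s\log p)$ holds uniformly in $s$. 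This mirrors the union-bound structure in Lemma~A3 of \citet{narisetty2018skinny}, now carried out with $\theta_S^\dagger$ in place of $\theta_S^\star$.

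Finally, for (a) and to close the argument, the lower bound in Condition~\ref{cond:design}(b) gives $\lambda_\text{min}\{J_n(S,\theta_S^\dagger)\}\ge\lambda n$, and Lemma~\ref{lem:Jn}, applied at every point of the segment joining $\hat\theta_S$ to $\theta_S^\dagger$, transfers this to $\bar J_n$ provided $|S|\,\|\hat\theta_S-\theta_S^\dagger\|^2=o(1)$. There is a mild circularity here—bounding $\lambda_\text{min}(\bar J_n)$ presupposes the very closeness being proved—that I would resolve by a localization argument: on the sphere $\|\theta_S-\theta_S^\dagger\|=M r_n$ with $r_n^2=n^{-1}s\,\Lambda_s\log p$ and $M$ large, the log-likelihood (concave near $\theta_S^\dagger$ by positive-definiteness of $J_n$) falls strictly below its value at $\theta_S^\dagger$ with high probability, forcing a root $\hat\theta_S$ inside the ball. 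Feeding $\|\hat\theta_S-\theta_S^\dagger\|^2=O_P(r_n^2)$ back in, the side condition of Lemma~\ref{lem:Jn} becomes $O_P(n^{-1}s_n^2\,\Lambda_{s_n}\log p)$, which tends to $0$ exactly because $s_n=O((n/\log p)^{(1-\tau')/2})$ and $\Lambda_{s_n}\le K^2(n/\log p)^\tau$ force $(n/\log p)^{-(\tau'-\tau)}\to0$; this simultaneously yields $\zeta_n\to0$ and $\lambda_\text{min}(\bar J_n)\ge(1-\zeta_n)\lambda n$. Combining (a) and (b) gives $\|\hat\theta_S-\theta_S^\dagger\|^2=O_P(n^{-1}s\,\Lambda_s\log p)$ uniformly over $|S|=s\le s_n$, as claimed.
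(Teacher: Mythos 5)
Your proposal follows essentially the same route as the paper: the paper's ``proof'' of this lemma consists of invoking Lemma~A3 of \citet{narisetty2018skinny} and asserting that their argument goes through almost word-for-word with $\theta_S^\dagger$ in place of $\theta_S^\star$, and your sketch is precisely that argument written out---score concentration plus a curvature lower bound, with the localization-on-a-sphere device to break the circularity. You also correctly supply the one point that makes the word-for-word transfer legitimate, namely that the score at the KL projection is still mean zero because the deterministic part $\dot b(X\theta^\star)-\dot b(X_S\theta_S^\dagger)$ is annihilated by the defining equation of $\theta_S^\dagger$; and your union-bound calibration (per-configuration tail $p^{-cs}$ against $\binom{p}{s}\le e^{s\log p}$ models, then a geometric sum over $s\le s_n$) and your check of the side condition of Lemma~\ref{lem:Jn}, via $s_n^2\Lambda_{s_n}(n/\log p)^{-1}\lesssim(n/\log p)^{-(\tau'-\tau)}\to 0$, match the structure the paper relies on.

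Two caveats. First, and most importantly, your stochastic step rests on Hanson--Wright applied to sub-Gaussian $\eps_i=y_i-\dot b(\eta_i^\star)$, which covers logistic/bounded-response models but not, e.g., Poisson regression, where the score coordinates are only sub-exponential. The lemma as stated is for general GLMs, and the paper explicitly splits the claim this way: the Narisetty-style argument handles the sub-Gaussian case, while the sub-exponential case is handled by citing \citet{lee.chae.glm}, building on \citet{spokoiny2017}. So your proof, as written, establishes only the sub-Gaussian half of the lemma. Second, a smaller point: when you bound the mean of the quadratic form by a multiple of $s\,\lambda_\text{max}\{J_n(S,\theta_S^\dagger)\}$, you implicitly compare the true-variance-weighted Gram matrix $X_S^\top \mathrm{diag}\{\dot\xi^2\,\V_{\theta^\star}(y_i)\}X_S$ with the model-weighted matrix $J_n(S,\theta_S^\dagger)=X_S^\top W(S,\theta_S^\dagger)X_S$; Condition~\ref{cond:design} controls only the latter, so you need the ratio of these weights to be bounded (automatic in the logistic case, a genuine extra requirement for general links). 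Neither caveat reflects a wrong idea---they mark exactly where the general-GLM case requires the heavier machinery the paper cites.
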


% \begin{proof}
% Just a quick sketch to explain the more general case under consideration here.  The same argument in \cite{narisetty2018skinny} establishes that 
% \[ \prob_{\theta^\star}\bigl\{ \|\hat\theta_S - \theta_S^\dagger\|^2 \gtrsim n^{-1} |S| \Lambda_{|S|} \log p \bigr\} \leq p^{-2|S|}, \]
% for all $S$ with $1 \leq |S| \leq s_n$.  There are ${p \choose s} \leq p^s$ many configurations of size $s$, so the union bound gives 
% \[ \prob_{\theta^\star}\bigl\{ \|\hat\theta_S - \theta_S^\dagger\|^2 \gtrsim n^{-1} |S| \Lambda_{|S|} \log p \; \text{ for some $S$} \bigr\} \lesssim \sum_{s=1}^{s_n} p^s \, p^{-2s} \lesssim p^{-1} = o(1). \qedhere \]
% %The upper bound is $o(1)$, which proves the claim. 
% \end{proof}

The following result is an important consequence of the two lemmas above.  The sub-Gaussian case is based on Lemma~3 in \cite{lee2021bayesian}, quoted as Equation~A7 of \cite{cao2020variable}, both based primarily on the analysis in \cite{narisetty2018skinny}.  The sub-exponential case is substantially more involved, and addressed in \citet{lee.chae.glm}.

\begin{lem}
\label{lem:loglik.diff}
Under Conditions~\ref{cond:dim}--\ref{cond:design}, there exists a constant $\kappa > 1$ such that 
\[ \ell_n(S, \hat\theta_S) - \ell_n(S^\star, \hat\theta_{S^\star}) \leq \kappa (\log p)(|S|-|S^\star|), \quad \text{with $\prob_{\theta^\star}$-probability $\to 1$}, \]
uniformly over $S$ with $S \supset S^\star$ and $|S| \leq s_n$.  
\end{lem}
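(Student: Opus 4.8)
The plan is to reduce the difference of maximized log-likelihoods to a score-type quadratic form of dimension exactly $|S|-|S^\star|$, and then control that form uniformly over supersets by a union bound. Because $S \supset S^\star$, padding $\theta^\star$ with zeros on $S\setminus S^\star$ gives $\theta_S^\dagger=\theta_S^\star$ and $\ell_n(S,\theta_S^\star)=\ell_n(S^\star,\theta_{S^\star}^\star)$, both equal to $\ell_n(\theta^\star)$. I would therefore write
\[ \ell_n(S,\hat\theta_S)-\ell_n(S^\star,\hat\theta_{S^\star}) = \{\ell_n(S,\hat\theta_S)-\ell_n(S,\theta_S^\star)\} - \{\ell_n(S^\star,\hat\theta_{S^\star})-\ell_n(S^\star,\theta_{S^\star}^\star)\}. \]
Expanding each bracket to second order about its MLE and using the score equation $\dot\ell_n(S,\hat\theta_S)=0$ turns each into a quadratic form in the score $U_S:=\dot\ell_n(S,\theta_S^\star)$ at the truth, with Hessian at an intermediate point. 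Lemma~\ref{lem:Jn} replaces that Hessian by $J_n(S,\theta_S^\star)$ up to a factor $(1\pm\zeta_n)$, and Lemma~\ref{lem:mle} ensures the intermediate points fall inside the radius where Lemma~\ref{lem:Jn} applies, uniformly over $|S|\le s_n$; so (abbreviating $J_n(S):=J_n(S,\theta_S^\star)$) the leading term is $\tfrac12 U_S^\top J_n(S)^{-1}U_S - \tfrac12 U_{S^\star}^\top J_n(S^\star)^{-1}U_{S^\star}$ up to a negligible multiplicative error.

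The key algebraic step is a Schur-complement identity. Writing $S=S^\star\cup T$ with $T=S\setminus S^\star$ and partitioning $J_n(S)$ into its $S^\star$-block $A$, cross-block $B$, and $T$-block $D$, the difference of the two quadratic forms collapses to
\[ (U_T-B^\top A^{-1}U_{S^\star})^\top (D-B^\top A^{-1}B)^{-1} (U_T-B^\top A^{-1}U_{S^\star}). \]
The identity is exact here because at $\theta^\star$ the weight matrix $W$ is common to all nested models, so $A=J_n(S^\star,\theta_{S^\star}^\star)$ genuinely is the $S^\star$ information. This is the efficient score statistic for the extra coordinates $T$, a quadratic form of dimension $d:=|T|=|S|-|S^\star|$; isolating the gain in dimension rather than the total dimension is exactly what produces the $|S|-|S^\star|$ factor in the bound.

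Denote this quadratic form by $Q_S$, so the log-likelihood difference equals $\tfrac12 Q_S$ up to negligible error. Under $\prob_{\theta^\star}$ the score is centered with covariance $J_n$, so $Q_S$ behaves like a $\chi^2_d$ variable whose mean $d$ is dwarfed by the threshold. In the bounded-response (sub-Gaussian) case the Bernstein-type estimate used in \citet{narisetty2018skinny} and \citet{lee2021bayesian} gives $\prob_{\theta^\star}(Q_S>2\kappa(\log p)d)\lesssim p^{-\kappa d}$ as $\log p\to\infty$; the sub-exponential case (e.g., Poisson) needs the sharper concentration of \citet{lee.chae.glm} and \citet{spokoiny2017}. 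A union bound over the $\binom{p-|S^\star|}{d}\le p^{d}$ supersets with $|T|=d$ gives total failure probability at most $\sum_{d=1}^{s_n} p^{d}p^{-\kappa d}=\sum_{d\ge 1}p^{-(\kappa-1)d}$, which tends to $0$ precisely because $\kappa>1$; this is exactly where $\kappa$ must exceed $1$. I expect the main obstacle to be making this tail bound simultaneously sharp---with exponent scaling like $\kappa(\log p)d$---and uniform over all supersets while absorbing the Taylor remainder from the first step; in the sub-exponential regime this uniform control is genuinely delicate and is where the cited machinery does the heavy lifting.
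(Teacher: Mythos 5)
The first thing to note is that the paper offers no proof of Lemma~\ref{lem:loglik.diff} at all: it imports the result from the literature, citing Lemma~3 of \citet{lee2021bayesian} (quoted as Equation~A7 of \citet{cao2020variable}) for the sub-Gaussian case and \citet{lee.chae.glm} (building on \citealp{spokoiny2017}) for the sub-exponential case, with both tracing back to the analysis in \citet{narisetty2018skinny}. Your proposal is, in substance, a reconstruction of the argument inside those cited references, and it is structurally sound: the exact decomposition through $\ell_n(S,\theta_S^\star)=\ell_n(S^\star,\theta_{S^\star}^\star)$ (legitimate because $\theta_S^\dagger=\theta_S^\star$ for supersets), the score expansion controlled by Lemmas~\ref{lem:Jn}--\ref{lem:mle}, the Schur-complement collapse to an efficient-score quadratic form of dimension $|S|-|S^\star|$---which is indeed the step that produces the factor $|S|-|S^\star|$ rather than $|S|$---and the union bound over the at most $p^{|S|-|S^\star|}$ supersets of each excess dimension, where $\kappa>1$ is exactly what makes the entropy factor summable. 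This matches where the constant $\kappa$ enters in the cited proofs and is consistent with the paper's remark that $\kappa$ can be taken arbitrarily close to $1$.

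Two caveats. First, your statement that ``the score is centered with covariance $J_n$'' is exact only for canonical links; for non-canonical links the $J_n$ defined in the paper is the expected (Fisher) information, and the identity $\mathrm{Var}_{\theta^\star}(U_S)=J_n(S,\theta_S^\star)$ needs the usual correct-specification argument, which the cited works supply. Second---the only place I would push back on your bookkeeping---the Taylor remainders you describe as a ``negligible multiplicative error'' are of size $\zeta_{n,S}\,U_S^\top J_n(S,\theta_S^\star)^{-1}U_S \asymp \zeta_{n,S}\,|S|\log p$, i.e., proportional to the \emph{full} quadratic forms rather than to their difference. Absorbing this into $(\kappa-1)(\log p)(|S|-|S^\star|)$ when $|S|-|S^\star|=1$ but $|S^\star|$ is large is not automatic under Conditions~\ref{cond:dim}--\ref{cond:design}; it requires the sharper, per-configuration control that you correctly identify at the end as the main obstacle. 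Deferring that step to the machinery of \citet{narisetty2018skinny}, \citet{lee2021bayesian}, and \citet{lee.chae.glm} is acceptable here, since that is the same standard the paper itself adopts---it defers the entire lemma.
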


\subsection{Marginal likelihood}
\label{AA:marginal}

Here we provide justification for the approximation \eqref{eq:S.post} of the marginal posterior $\pi^n$ at configuration $S$.  Recall that the marginal posterior satisfies 
\[ \pi^n(S) = \frac{\pi_n(S) \, m_n(S)}{\sum_{S'} \pi_n(S) \, m_n(S)}, \]
where $m_n(S)$ is the marginal likelihood for configuration $S$:
\[ m_n(S) = \int_{\RR^{|S|}} \underbrace{L_n(S,\theta_S)^\alpha \, \nm_{|S|}(\theta_S \mid \hat\theta_S, \gamma J_n(S,\hat\theta_S)^{-1})}_{\text{$= g(\theta_S)$, say}} \, d\theta_S. \]
So the goal is to lower- and upper-bound the integral $m_n(S)$. Aside from providing justification for our simple computational strategy, the result in Lemma~\ref{lem:laplace} below will be useful in the proofs of our main results below.  In particular, we will have a need to bound 
\[ \frac{\pi^n(S)}{\pi^n(S^\star)} = \frac{\pi_n(S) \, m_n(S)}{\pi_n(S^\star) \, m_n(S^\star)} \]
%and the marginal likelihood bounds will be useful for this.  

\begin{lem}
\label{lem:laplace}
Under Conditions~\ref{cond:dim}--\ref{cond:design}, the marginal likelihood $m_y(S)$ satisfies 
\[ 1 \leq \frac{m_n(S)}{(1 + \alpha \gamma)^{-|S|/2} \, L_n(S,\hat\theta_S)^\alpha} \leq 1 + e^{-C|S| \Lambda_{|S|} \log p}, \]
with $\prob_{\theta^\star}$-probability tending to 1, for a constant $C > 0$.  Consequently, %the posterior probability ratio satisfies the bound
\begin{equation}
\label{eq:prob.ratio}
\frac{\pi^n(S)}{\pi^n(S^\star)} \leq 2 \, \frac{\pi_n(S)}{\pi_n(S^\star)} (1 + \alpha \gamma)^{-(|S|-|S^\star|)/2} \, \frac{L_n(S,\hat\theta_S)^\alpha}{L_n(S^\star, \hat\theta_{S^\star})^\alpha}.
\end{equation}
\end{lem}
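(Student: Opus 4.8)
The plan is to treat $m_n(S)$ as a Laplace integral and show that it is almost exactly the value it would take if the log-likelihood were a pure quadratic. Writing $g(\theta_S) = L_n(S,\theta_S)^\alpha \, \nm_{|S|}(\theta_S \mid \hat\theta_S, \gamma J_n(S,\hat\theta_S)^{-1})$ and $v = \theta_S - \hat\theta_S$, a second-order Taylor expansion of $\ell_n(S,\cdot)$ about the MLE $\hat\theta_S$, where the score vanishes, gives the exact mean-value remainder $\ell_n(S,\theta_S) - \ell_n(S,\hat\theta_S) = -\tfrac12 v^\top J_n(S,\tilde\theta_S)\, v$ for some $\tilde\theta_S$ on the segment joining $\theta_S$ and $\hat\theta_S$. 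Substituting this and completing the square, a direct computation shows that replacing $J_n(S,\tilde\theta_S)$ by $J_n(S,\hat\theta_S)$ throughout yields exactly $\int g = (1+\alpha\gamma)^{-|S|/2} L_n(S,\hat\theta_S)^\alpha$, which is precisely the denominator appearing in the statement. Hence the entire problem reduces to controlling the discrepancy between $J_n(S,\tilde\theta_S)$ and $J_n(S,\hat\theta_S)$ inside the integral.

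First I would split $\RR^{|S|}$ into a central ball $A = \{\theta_S : |S|\,\|\theta_S - \hat\theta_S\|^2 \leq \delta_n\}$, with $\delta_n \to 0$ slowly, and its complement. Because the empirical prior has covariance $\gamma J_n(S,\hat\theta_S)^{-1} = O(\gamma/n)$, it is extremely concentrated, so $A$ carries all but a negligible fraction of the prior mass; this is the structural reason the approximation error can be as small as the stated $e^{-C|S|\Lambda_{|S|}\log p}$. On $A$, Lemma~\ref{lem:mle} guarantees that $\hat\theta_S$ lies within the MLE rate of $\theta_S^\dagger$, so both $\hat\theta_S$ and every intermediate $\tilde\theta_S$ satisfy the smallness hypothesis of Lemma~\ref{lem:Jn}; applying that lemma at $\tilde\theta_S$ and at $\hat\theta_S$ and chaining the two bounds gives $(1-\zeta_n) J_n(S,\hat\theta_S) \leq J_n(S,\tilde\theta_S) \leq (1+\zeta_n) J_n(S,\hat\theta_S)$ uniformly on $A$, on an event of $\prob_{\theta^\star}$-probability tending to one. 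These two matrix inequalities translate into a quadratic sandwich of $\ell_n(S,\theta_S) - \ell_n(S,\hat\theta_S)$ and hence, after exponentiation and a Gaussian integration, into matching factors $(1 + (1\pm\zeta_n)\alpha\gamma)^{-|S|/2}$.

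For the upper bound I would combine the central estimate (using the lower Hessian bound $J_n(S,\tilde\theta_S) \geq (1-\zeta_n)J_n(S,\hat\theta_S)$) with a tail estimate on $A^c$: bounding $L_n(S,\theta_S)^\alpha \leq L_n(S,\hat\theta_S)^\alpha$ and using a Gaussian deviation bound for the concentrated prior shows that the $A^c$ contribution, relative to the denominator, is at most $e^{-C|S|\Lambda_{|S|}\log p}$, where the radius $\delta_n$ and Condition~\ref{cond:design} are used to make the tail decay beat the $(1+\alpha\gamma)^{|S|/2}$ normalization. For the lower bound one restricts to $A$ and uses the upper Hessian bound. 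I expect the delicate point to be obtaining the clean constant $1$ on the left: the naive sandwich leaves a factor $\{(1+\alpha\gamma)/(1+(1\pm\zeta_n)\alpha\gamma)\}^{|S|/2}$ together with a central-ball mass factor, and showing these do not erode the constant below $1$ (equivalently, that the non-quadratic remainder does not lower the integral below its pure-quadratic value, up to terms absorbed by the probability-tending-to-one statement) is where the careful bookkeeping lives; it is driven by the smallness of $|S|\,\zeta_n\,\alpha\gamma$ under Conditions~\ref{cond:dim}--\ref{cond:design}. The tracking of these constants is routine but is the crux of the argument.

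Finally, the consequence \eqref{eq:prob.ratio} follows immediately. Applying the two-sided bound at $S^\star$ gives $m_n(S^\star) \geq (1+\alpha\gamma)^{-|S^\star|/2} L_n(S^\star,\hat\theta_{S^\star})^\alpha$, while at $S$ the upper bound together with $e^{-C|S|\Lambda_{|S|}\log p} \leq 1$ gives $m_n(S) \leq 2\,(1+\alpha\gamma)^{-|S|/2} L_n(S,\hat\theta_S)^\alpha$. Forming the ratio $\pi^n(S)/\pi^n(S^\star) = \{\pi_n(S)\, m_n(S)\}/\{\pi_n(S^\star)\, m_n(S^\star)\}$ and cancelling the common factors produces exactly the stated bound \eqref{eq:prob.ratio}.
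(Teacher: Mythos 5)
Your overall strategy coincides with the paper's: the same interpretation of $m_n(S)$ as a Laplace integral, the same split into a central ball around $\hat\theta_S$ and its complement, the same Taylor-plus-Lemma~\ref{lem:Jn} sandwich of the Hessian on the ball leading to Gaussian integrals with factors of the form $\{1+\alpha\gamma(1\pm\zeta_n)\}^{-|S|/2}$, and an identical derivation of \eqref{eq:prob.ratio} from the two-sided bound (lower bound at $S^\star$, upper bound at $S$, the extra exponentially small term absorbed into the factor $2$). Your concern about obtaining the clean constant $1$ on the left is also well placed: the paper deals with it by a short chain of inequalities comparing $\{1+\alpha\gamma/(1+\zeta_n)\}^{-|S|/2}$ with $(1+\alpha\gamma)^{-|S|/2}$, which is exactly the bookkeeping you anticipate but do not carry out.

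The genuine gap is in your tail estimate on $A^c$. You bound $L_n(S,\theta_S)^\alpha \leq L_n(S,\hat\theta_S)^\alpha$ there and ask the prior's Gaussian tail to deliver the factor $e^{-C|S|\Lambda_{|S|}\log p}$ after paying the $(1+\alpha\gamma)^{|S|/2}$ normalization. This cannot work under the stated conditions. The prior covariance is $\gamma J_n(S,\hat\theta_S)^{-1}$, and Condition~\ref{cond:prior}(a) allows $\gamma$ of order $\Lambda_{2s_n}^2 \asymp (n/\log p)^{2\tau}$; then the prior mass outside a ball with $|S|\,\|\theta_S-\hat\theta_S\|^2 \leq \delta_n$ is only about $\exp\{-c\,n\lambda\,\delta_n/(\gamma|S|)\}$, so matching the target exponent forces $\delta_n \gtrsim n^{-1}\gamma\,|S|^2\Lambda_{|S|}\log p$, while Lemma~\ref{lem:Jn}---which your central-ball analysis needs---forces $\delta_n = o(1)$. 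For $|S|$ of order $s_n$ these two demands are incompatible whenever $3\tau > \tau'$, so no admissible radius exists in general (your route is fine only in the bounded-eigenvalue regime $\tau=0$, e.g., logistic regression). The paper avoids this by keeping the \emph{likelihood} decay rather than the prior decay on the complement: with $r_n^2(S) = n^{-1}|S|\Lambda_{|S|}\log p$, the quadratic upper bound on $\ell_n(S,\cdot)-\ell_n(S,\hat\theta_S)$ at the boundary of $A_S$ extends to all of $A_S^c$ by concavity of the log-likelihood, giving $\ell_n(S,\theta_S)-\ell_n(S,\hat\theta_S) \leq -Cnr_n^2(S)$ there, hence $\int_{A_S^c} g \leq L_n(S,\hat\theta_S)^\alpha e^{-\alpha C|S|\Lambda_{|S|}\log p}$ with the prior mass bounded trivially by one. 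The paper even remarks that the prior tail is no smaller than this likelihood term and so cannot improve on it---which is precisely the substitution your proposal attempts.
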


\begin{proof}
Thanks to Lemma~\ref{lem:mle}, it is safe to assume that the MLEs $\hat\theta_S$ are all within a small neighborhood of their respective targets $\theta_S^\dagger$.  Split the marginal likelihood integral into two parts according to $\RR^{|S|} = A_S \cup A_S^c$, where $A_S = \{\theta_S: \|\theta_S - \hat\theta_S\|^2 \leq r_n^2(S)\}$,  and $r_n^2(S) = n^{-1} |S| \Lambda_{|S|} \log p$. For $\theta_S \in A_S$, by Taylor's theorem and Lemma~\ref{lem:Jn}, we get 
\begin{align*}
\ell_n(S,\theta_S) - \ell_n(S,\hat\theta_S) & \geq -\tfrac12 (1+\zeta_n) (\theta_S - \hat\theta_S)^\top J_n(S,\hat\theta_S) (\theta_S - \hat\theta_S) \\
\ell_n(S,\theta_S) - \ell_n(S,\hat\theta_S) & \leq -\tfrac12 (1-\zeta_n) (\theta_S - \hat\theta_S)^\top J_n(S,\hat\theta_S) (\theta_S - \hat\theta_S). 
\end{align*}
The first of the above two displays gives a lower bound on the marginal likelihood,
\begin{align*}
m_n(S) & \geq \int_{A_S} g(\theta_S) \, d\theta_S \\
& \geq \{1 + \alpha \gamma / (1+\zeta_n)\}^{-|S|/2} \, L_n(S,\hat\theta_S)^\alpha \\
& \geq (1 + \alpha \gamma)^{-|S|/2} \Bigl\{ \frac{1 + \alpha \gamma/(1 + \zeta_n)}{1 + \alpha\gamma}\Bigr\}^{-|S|/2} \, L_n(S,\hat\theta_S)^\alpha \\
& \geq (1 + \alpha \gamma)^{-|S|/2} \, L_n(S,\hat\theta_S)^\alpha,
\end{align*}
which agrees with the familiar Laplace approximation expression used in \eqref{eq:S.post}.  Similarly, for an upper bound on the marginal likelihood, we get 
\begin{align*}
m_n(S) & = \Bigl( \int_{A_S} + \int_{A_S^c} \Bigr) g(\theta_S) \, d\theta_S \leq (1 + \alpha \gamma)^{-|S|/2} L_n(S,\hat\theta_S)^\alpha + \int_{A_S^c} g(\theta_S) \, d\theta_S,
\end{align*}
where, again, the first term in the above bound agrees with the Laplace approximation expression in \eqref{eq:S.post}. For $\theta_S \in A_S^c$, we have $\ell_n(S,\theta_S) - \ell_n(S,\hat\theta_S) \leq -Cnr_n^2(S)$, so 
\[ \int_{A_S^c} g(\theta_S) \, d\theta_S \leq L_n(S,\hat\theta_S)^\alpha e^{-\alpha C n\delta_n^2(S)} \, \Pi_{n,S}(A_S^c) \leq L_n(S,\hat\theta_S)^\alpha e^{-\alpha C nr_n^2(S)}. \]
The prior probability of $A_S^c$ has a non-trivial, exponentially small upper bound, but it is no smaller than the other exponentially small term in the above display, so its inclusion does not improve the overall bound.  Since the above arguments all hold with probability tending to 1, this completes the proof of the lemma's first claim.  For the second claim, we consider the ratio 
\[ \frac{m_n(S)}{m_n(S^\star)} \leq \frac{(1 + \alpha\gamma)^{-|S|/2} \{1 + e^{-C n r_n^2(S)}\}}{(1 + \alpha\gamma)^{-|S^\star|/2}} \, \frac{L_n(S,\hat\theta_S)^\alpha}{L_n(S^\star, \hat\theta_{S^\star})^\alpha}.  \]
Then the second claim follows since the term in curly braces above is bounded by 2, uniformly in $S$.  
%, note that 
%\[ A_S^c \subseteq \{ \theta_S: \gamma^{-1} (\theta_S - \hat\theta_S)^\top J_n(S,\hat\theta_S) (\theta_S - \hat\theta_S) > \gamma^{-1} n\lambda \delta_n^2(S)\}. \]
%The prior probability of the right-most event above is just a ${\sf ChiSq}(|S|)$ tail probability, which is of the order 
%\[ \exp\{-C'\lambda \gamma^{-1} n\delta_n^2(S)\} \leq \exp\{-C'' |S| \log p\}, \]
%where the inequality follows since $\gamma_S \gtrsim \Lambda_{|S|}$.  
\end{proof}

\subsection{Empirical priors and posterior concentration}
\label{AA:empirical.priors}

We briefly describe the general framework in \cite{martin2019data} for establishing posterior concentration rates with empirical priors in high-dimensional problems.  They put forward sufficient conditions on the empirical prior, one they called {\em local} and the other {\em global}.  In what follows, let $\Pi_n = (\pi_n, \Pi_{n,S})$ be a general empirical prior for $\Theta = (S,\Theta_S)$, and $\Pi^n = \Pi^{n,\alpha}$ the corresponding posterior that uses power $\alpha \in (0,1)$ on the likelihood function.  Also, let $\eps_n = \eps_n(\theta^\star)$ be a generic deterministic sequence that satisfies $\eps_n \to 0$ and $n \eps_n^2 \to \infty$ and may depend on the true $\theta^\star$.  

\begin{lp}
Given $\eps_n$, there exists constants $B,D > 0$ such that 
\begin{align}
\pi_n(S^\star) \gtrsim e^{-Bn\eps_n^2}, \quad \text{for all large $n$}, \label{eq:lp.S} \\
\prob_{\theta^\star}\bigl[ \Pi_{n,S^\star}\{{\cal L}_n(S^\star)\} > e^{-Dn\eps_n^2} \bigr] & \to 1, \quad n \to \infty, \label{eq:lp.theta}
\end{align}
where 
\[ {\cal L}_n(S^\star) = \{\theta \in \RR^p: L_n(S^\star,\theta_{S^\star}) \geq e^{-dn\eps_n^2} L_n(S^\star, \hat\theta_{S^\star})\}, \quad d>0, \]
%then the empirical prior satisfies the {\em local} prior concentration property relative to $\eps_n$. 
\end{lp}

\begin{gp}
Given $\eps_n$, there exists $G > 0$ and $m > 1$ such that 
\begin{equation}
\label{eq:gp}
\sum_S \pi_n(S) \int [ \E_{\theta^\star}\{ \pi_{n,S}(\theta_S)^m \} ]^{1/m} \, d\theta_S \lesssim e^{G n\eps_n}, \quad \text{$n$ large}.
\end{equation}
%then the empirical prior satisfies the {\em global} prior concentration property relative to $\eps_n$. 
\end{gp}

Under these conditions, a convergence rate in terms of Hellinger distance between joint distributions follows; see Theorem~2 in \cite{martin2019data}.  In regression cases like the GLMs under consideration here, the Hellinger rate in terms of joint distributions implies the same rate for the root average squared conditional Hellinger distances in \eqref{eq:hellinger}; see Appendix~\ref{proof:rate} below for details.

\section{Proofs}
\label{S:proofs}

\subsection{Proof of Theorem~\ref{thm:h.rate}}
\label{proof:rate}

The proof proceeds by first checking that the local and global prior conditions, as described above, are met under the conditions stated in Theorem~\ref{thm:h.rate} above.  Then Theorem~2 of \cite{martin2019data} implies the Hellinger rate result in \eqref{eq:h.rate}.  Since Theorem~\ref{thm:dim} holds independently and under the same conditions as the theorem we are currently proving, we can assume, where relevant, that $S$ is such that $|S| \leq C|S^\star|$.  

\begin{lem}
\label{lem:lp}
Under Conditions~\ref{cond:dim}--\ref{cond:prior}, our proposed empirical prior satisfies the local prior condition as described above, with $\eps_n(\theta^\star) = (n^{-1} |S(\theta^\star)| \log p)^{1/2}$. 
\end{lem}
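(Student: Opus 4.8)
The plan is to verify the two requirements \eqref{eq:lp.S} and \eqref{eq:lp.theta} of the local prior condition separately, throughout using $n\eps_n^2 = s^\star \log p$ with $s^\star = |S(\theta^\star)|$. For \eqref{eq:lp.S} I would first evaluate the normalizing constant of the complexity prior \eqref{eq:complexity.prior}: each size-$s$ slice carries total unnormalized mass $\textstyle{p\choose s}{p\choose s}^{-1}p^{-\beta s} = p^{-\beta s}$, so the normalizer is $\sum_{s\le s_n}p^{-\beta s} = O(1)$. Hence $\pi_n(S^\star)\asymp{p\choose s^\star}^{-1}p^{-\beta s^\star}$, and bounding ${p\choose s^\star}\le p^{s^\star}$ gives $\pi_n(S^\star)\gtrsim p^{-(1+\beta)s^\star} = e^{-(1+\beta)n\eps_n^2}$; thus \eqref{eq:lp.S} holds with $B = 1+\beta$.

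For \eqref{eq:lp.theta} I would work on the $\prob_{\theta^\star}$-probability-one (as $n\to\infty$) event on which Lemmas~\ref{lem:mle} and \ref{lem:Jn} hold. Since $S^\star\supset S^\star$ we have $\theta_{S^\star}^\dagger = \theta_{S^\star}^\star$, and $\hat\theta_{S^\star}$ is consistent at the Lemma~\ref{lem:mle} rate. Expanding $\theta_{S^\star}\mapsto\ell_n(S^\star,\theta_{S^\star})$ to second order about the MLE (vanishing gradient) and sandwiching the Hessian remainder by $(1+\zeta_n)J_n(S^\star,\hat\theta_{S^\star})$ via Lemma~\ref{lem:Jn}, exactly as in the proof of Lemma~\ref{lem:laplace}, yields
\[ \ell_n(S^\star,\hat\theta_{S^\star}) - \ell_n(S^\star,\theta_{S^\star}) \le \tfrac12(1+\zeta_n)(\theta_{S^\star}-\hat\theta_{S^\star})^\top J_n(S^\star,\hat\theta_{S^\star})(\theta_{S^\star}-\hat\theta_{S^\star}) \]
on a neighborhood of $\hat\theta_{S^\star}$. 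Consequently the ellipsoid $E = \{\theta:\tfrac12(1+\zeta_n)(\theta_{S^\star}-\hat\theta_{S^\star})^\top J_n(S^\star,\hat\theta_{S^\star})(\theta_{S^\star}-\hat\theta_{S^\star})\le dn\eps_n^2\}$ is contained in ${\cal L}_n(S^\star)$; a short computation using $\lambda_{\min}(J_n)\ge n\lambda$ and Lemma~\ref{lem:mle} confirms that $E$ sits inside the neighborhood where Lemma~\ref{lem:Jn} is valid (there $|S^\star|\,\|\theta_{S^\star}-\theta_{S^\star}^\star\|^2 = o(1)$).

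It then remains to evaluate the prior mass of $E$. Under $\Pi_{n,S^\star} = \nm_{s^\star}(\hat\theta_{S^\star},\gamma J_n(S^\star,\hat\theta_{S^\star})^{-1})$ the whitened vector $Z = \gamma^{-1/2}J_n(S^\star,\hat\theta_{S^\star})^{1/2}(\theta_{S^\star}-\hat\theta_{S^\star})$ is standard normal, so the quadratic form equals $\gamma\|Z\|^2$ and
\[ \Pi_{n,S^\star}\{{\cal L}_n(S^\star)\}\ge\Pi_{n,S^\star}(E) = \prob\Bigl(\chi^2_{s^\star}\le\frac{2ds^\star\log p}{(1+\zeta_n)\gamma}\Bigr). \]
The task reduces to lower-bounding this chi-square lower tail by $e^{-Dn\eps_n^2} = p^{-Ds^\star}$. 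I would use the product bound $\prob(\chi^2_k\le x)\ge\prod_{i=1}^k\prob(Z_i^2\le x/k)\ge(c\,(x/k)^{1/2}e^{-x/(2k)})^k$ with $k = s^\star$ and $x = 2ds^\star\log p/\{(1+\zeta_n)\gamma\}$; the resulting exponent is of order $s^\star(\log\gamma-\log\log p)$, and since Condition~\ref{cond:prior}(a) keeps $\gamma$ polynomially bounded in $n$ (so $\log\gamma = O(\log n) = O(\log p)$ in the high-dimensional regime $\log n\lesssim\log p$ that the paper works in), this stays at the scale $s^\star\log p$. Choosing $D$ accordingly gives \eqref{eq:lp.theta}.

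I expect this last step to be the main obstacle. When $\tau = 0$ the scale $\gamma$ is bounded, the threshold $x$ exceeds its degrees of freedom $s^\star$, and the chi-square probability is bounded below by a constant; but when $\tau>0$ the prior scale $\gamma$ may diverge like $(n/\log p)^{2\tau}$, the threshold becomes $o(s^\star)$, and one genuinely needs the sharp product lower bound for the chi-square lower tail, together with the polynomial bound on $\gamma$ and $\log n\lesssim\log p$, to keep the exponent from exceeding $n\eps_n^2$.
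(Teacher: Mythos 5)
Your proposal is correct and takes essentially the same route as the paper's proof: the same ${p \choose s^\star} \leq p^{s^\star}$ bound for \eqref{eq:lp.S}, and for \eqref{eq:lp.theta} the same Taylor expansion at the MLE with the Lemma~\ref{lem:Jn} sandwich, reducing $\Pi_{n,S^\star}\{{\cal L}_n(S^\star)\}$ to a chi-square lower-tail probability with threshold of order $s^\star \log p / \gamma$. The only cosmetic difference is the final tail bound---you use a product of coordinatewise Gaussian probabilities where the paper bounds the chi-square integral directly---and your explicit requirement $\log\gamma = O(\log p)$ is exactly what the paper's claim $\delta_n \gg p^{-1}$ encodes.
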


\begin{proof}
Fix $\theta^\star$ and set $s^\star = |S(\theta^\star)|$. The first part of the local prior condition is easy to check, with $n\eps_n^2 = s^\star \log p$.  Indeed, using the inequality ${p \choose s} \leq p^s$ we get
\[ \pi_n(S^\star) = \frac{(p^{-a})^{s^\star}}{{p \choose s^\star}} \geq e^{-(1 + a) s^\star \log p} = e^{-(1+a) n\eps_n^2}, \]
so the bound in \eqref{eq:lp.S} holds with $B=1+a > 0$.  

Next, for \eqref{eq:lp.theta}, the data-dependent neighborhood ${\cal L}_n(S^\star)$ is given by 
\[ {\cal L}_n(S^\star) = \{\theta_{S^\star}: \ell_n(S^\star, \theta_{S^\star}) - \ell_n(S^\star, \hat\theta_{S^\star}) > -dn\eps_n^2\}. \]
Since $\ell_n$ is concave, this is a bounded neighborhood of $\hat\theta_{S^\star}$.  It is a relatively small neighborhood too, since the log-likelihood is of order $n$; this means that Lemma~\ref{lem:Jn} applies to any $\theta_{S^\star} \in {\cal L}_n(S^\star)$ and to $\hat\theta_{S^\star}$.  For $\theta_{S^\star} \in {\cal L}_n(S^\star)$, Taylor's theorem implies
\[ \ell_n(S^\star, \theta_{S^\star}) - \ell_n(S^\star, \hat\theta_{S^\star}) = -\tfrac12 (\theta_{S^\star} - \hat\theta_{S^\star})^\top J_n(S^\star, \tilde\theta_{S^\star}) (\theta_{S^\star} - \hat\theta_{S^\star}), \]
where $\tilde\theta_{S^\star} \in {\cal L}_n(S^\star)$ satisfies $\|\tilde\theta_{S^\star} - \hat\theta_{S^\star}\| \leq \|\theta_{S^\star} - \hat\theta_{S^\star}\|$.  Applying Lemma~\ref{lem:Jn} first with $\tilde\theta_{S^\star}$ and then with $\hat\theta_{S^\star}$ gives 
\begin{equation}
\label{eq:J.bound}
J_n(n,\tilde\theta_{S^\star}) \leq c_n \, J_n(S^\star, \hat\theta_{S^\star}), \quad \text{with probability $\to 1$}, 
\end{equation}
where $c_n = (1 - \zeta_n)(1 + \zeta_n) \to 1$.  Therefore, 
\[ {\cal L}_n(S^\star) \supset \{ \theta_{S^\star}: \gamma^{-1} (\theta_{S^\star} - \hat\theta_{S^\star})^\top J_n(S^\star, \hat\theta_{S^\star}) (\theta_{S^\star} - \hat\theta_{S^\star}) < \delta_n \}, \]
where $\delta_n := 2d n\eps_n^2/c_n \gamma \gg p^{-1}$.  The prior probability of the event on the right-hand side of the above display is the probability that $Z < \delta_n$, where $Z \sim {\sf ChiSq}(s^\star)$.  Therefore, 
\begin{align*}
\Pi_{n,S^\star}\{{\cal L}_n(S^\star)\} & = \prob( Z < \delta_n ) \\
& = \frac{1}{2^{s^\star/2} \Gamma(s^\star/2)} \int_0^{\delta_n} z^{s^\star/2-1} e^{-z/2} \, dz \\
& \geq \frac{e^{-\delta_n/2}}{2^{s^\star/2} \Gamma(s^\star/2)} \int_0^{\delta_n} z^{s^\star/2 - 1} \, dz \\
& = \frac{2e^{-\delta_n/2} \delta_n^{s^\star / 2}}{s^\star 2^{s^\star/2} \Gamma(s^\star/2)}.
\end{align*}
The lower bound on $\Pi_n\{{\cal L}_n(S^\star)\}$ is $ \geq e^{-D n \eps_n^2}=e^{-D s^\star \log p}$ for some $D > 0$, as was to be shown.  The prior probability bound holds surely, so \eqref{eq:lp.theta} follows from the ``probability $\to 1$'' conclusion in \eqref{eq:J.bound}.  
\end{proof}

\begin{lem}
\label{lem:gp}
Under Conditions~\ref{cond:dim}--\ref{cond:prior}, our proposed empirical prior satisfies the global prior condition as described above, with $\eps_n(\theta^\star) = (n^{-1} |S(\theta^\star)| \log p)^{1/2}$. 
\end{lem}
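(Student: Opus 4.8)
The plan is to verify \eqref{eq:gp} by replacing the data-dependent Gaussian prior density with a non-random reference Gaussian, evaluating the resulting moment integral, and then summing against the rapidly decaying complexity weights $\pi_n(S)$. Fix a configuration $S$, write $d=|S|$ and $J^\dagger=J_n(S,\theta_S^\dagger)$, and recall that $\pi_{n,S}$ is the $\nm_{d}(\hat\theta_S,\gamma J_n(S,\hat\theta_S)^{-1})$ density, so that $\pi_{n,S}(\theta_S)^m$ equals $(2\pi\gamma)^{-md/2}|J_n(S,\hat\theta_S)|^{m/2}\exp\{-\tfrac{m}{2\gamma}(\theta_S-\hat\theta_S)^\top J_n(S,\hat\theta_S)(\theta_S-\hat\theta_S)\}$. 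First I would introduce the good event $\mathcal{E}_n$ on which Lemma~\ref{lem:mle} controls $\|\hat\theta_S-\theta_S^\dagger\|$ and Lemma~\ref{lem:Jn} gives $(1-\zeta_n)J^\dagger\le J_n(S,\hat\theta_S)\le(1+\zeta_n)J^\dagger$; on $\mathcal{E}_n$ both the determinant prefactor and the quadratic form may be replaced by their $\theta_S^\dagger$-versions at the cost of harmless $(1\pm\zeta_n)^{d}$ multipliers. Using subadditivity of $t\mapsto t^{1/m}$, I would then split $[\E_{\theta^\star}\{\pi_{n,S}(\theta_S)^m\}]^{1/m}$ into contributions from $\mathcal{E}_n$ and $\mathcal{E}_n^c$ and integrate each over $\theta_S$.

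The core computation is the $\mathcal{E}_n$-contribution. I would first record the idealized value obtained by pretending $\hat\theta_S$ is exactly normal about $\theta_S^\dagger$ with covariance $J^{\dagger-1}$ and freezing the prior covariance at $\gamma J^{\dagger-1}$: the moment integral is then a Gaussian-against-Gaussian convolution that collapses to a product over the eigenvalues $\rho_i$ of $J^{\dagger-1}(\gamma J^{\dagger-1})^{-1}=\gamma^{-1}I$, namely $\prod_i(1+m\rho_i)^{(1/2)(1-1/m)}=(1+m/\gamma)^{(d/2)(1-1/m)}$. To make this rigorous without exact normality, I would write $a=\theta_S-\theta_S^\dagger$ and $b=\hat\theta_S-\theta_S^\dagger$, expand $(\theta_S-\hat\theta_S)^\top J^\dagger(\theta_S-\hat\theta_S)=a^\top J^\dagger a-2a^\top J^\dagger b+b^\top J^\dagger b$, bound $\exp\{-\tfrac{m}{2\gamma}b^\top J^\dagger b\}\le1$, and control the only genuinely data-dependent factor $\exp\{\tfrac{m}{\gamma}a^\top J^\dagger b\}$ through the sub-Gaussian moment generating function bound underlying Lemma~\ref{lem:mle} (from \cite{narisetty2018skinny}, with the sub-exponential refinement of \cite{lee.chae.glm}). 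Carrying out the remaining Gaussian integral in $a$ and cancelling the determinant prefactor reproduces a bound of the same order, so that
\[ \int [\E_{\theta^\star}\{\mathbf{1}_{\mathcal{E}_n}\,\pi_{n,S}(\theta_S)^m\}]^{1/m}\,d\theta_S \;\le\; (1+\zeta_n)^{d}\,(1+m/\gamma)^{(d/2)(1-1/m)} \;\le\; e^{G_0 d}, \]
for a fixed $G_0>0$; note the factor is arbitrarily close to $1$ per coordinate when $\gamma$ diverges.

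The $\mathcal{E}_n^c$-contribution is the main obstacle, and it is exactly where the GLM problem departs from the conjugate linear-model analysis of \cite{martin.mess.walker.eb}, in which $\hat\theta_S$ is exactly Gaussian and the entire moment integral is available in closed form. On $\mathcal{E}_n^c$ we lose control of both $\hat\theta_S$ and $J_n(S,\hat\theta_S)$, while the peak of $\pi_{n,S}(\theta_S)^m$ grows like a power of $n/\gamma$, so a crude bound does not suffice: an honest in-expectation estimate is required. The key observation is that, for fixed $\theta_S$, the density $\pi_{n,S}(\theta_S)$ is large only when $\hat\theta_S$ lies near $\theta_S$; pairing the polynomial peak height against the exponentially small probability of such a deviation, supplied by the sharp MLE tail bounds of \cite{narisetty2018skinny} and \cite{lee.chae.glm}, drives $\int[\E_{\theta^\star}\{\mathbf{1}_{\mathcal{E}_n^c}\pi_{n,S}(\theta_S)^m\}]^{1/m}\,d\theta_S$ to be negligible relative to the $\mathcal{E}_n$-term. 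The same tail bounds also cover the range of $\theta_S$ for which the sub-Gaussian moment generating function argument of the previous paragraph ceases to apply.

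Combining the two contributions gives $\int[\E_{\theta^\star}\{\pi_{n,S}(\theta_S)^m\}]^{1/m}\,d\theta_S\le e^{G_0|S|}$ uniformly over $S$ with $|S|\le s_n$. Summing against the complexity prior and using the factor ${p\choose|S|}^{-1}$ to cancel the number of configurations of each size,
\[ \sum_S \pi_n(S)\,e^{G_0|S|} \;\propto\; \sum_{s\le s_n} p^{-\beta s}\,e^{G_0 s} \;=\; \sum_{s\le s_n}\bigl(e^{G_0}p^{-\beta}\bigr)^{s}, \]
which is a convergent geometric series once $p^{\beta}>e^{G_0}$, i.e.\ for all large $p$. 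Hence the left-hand side of \eqref{eq:gp} is $O(1)$ and is in particular bounded by $e^{Gn\eps_n}$ for any $G>0$ and all large $n$, which is the global prior condition with $\eps_n(\theta^\star)=(n^{-1}|S(\theta^\star)|\log p)^{1/2}$.
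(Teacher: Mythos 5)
Your overall architecture---a good event built from Lemmas~\ref{lem:Jn}--\ref{lem:mle}, an envelope for the random prior density, then a sum against $\pi_n$---matches the paper's, but two load-bearing steps in your argument rely on tools that do not exist in this setting. The central one is your claimed dimension-only bound $e^{G_0|S|}$ for the $\mathcal{E}_n$-contribution. To get it, you must integrate the cross term $\exp\{\tfrac{m}{\gamma}a^\top J^\dagger b\}$, with $b=\hat\theta_S-\theta_S^\dagger$, against the actual distribution of the data, i.e., you need a sub-Gaussian moment generating function bound for the MLE deviation itself. Lemma~\ref{lem:mle} is an in-probability statement (an $O_P$ rate) and supplies no exponential moments; worse, no unconditional bound of this kind can hold: in logistic regression the MLE fails to exist with positive probability (separable data), so $\E_{\theta^\star}\exp\{\lambda\, a^\top J^\dagger b\}=\infty$, and in Poisson regression the prefactor $|J_n(S,\hat\theta_S)|^{1/2}$ is an unbounded function of $\hat\theta_S$. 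This is exactly why the paper abandons in-expectation bounds (it flags this as the key technical departure from the linear-model case) and argues purely in probability: on $\mathcal{E}_n$ the only available control on $b$ is the worst-case bound $\|b\|^2\lesssim n^{-1}|S|\Lambda_{|S|}\log p$, which via Cauchy--Schwarz turns the cross term into a \emph{deterministic} tilt $e^{t_S\|\Sigma_S^{-1/2}(\theta_S-\theta_S^\dagger)\|}$ with $t_S^2\sim|S|\log p$; since the resulting envelope is non-random, the expectation in \eqref{eq:gp} is trivial, and integrating the tilt against the Gaussian (a chi moment generating function) yields $e^{G|S|\log p}$---with an unavoidable $\log p$ in the exponent---rather than your $e^{G_0|S|}$.

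The second gap is the $\mathcal{E}_n^c$ term, which cannot be rescued as you sketch: the ``polynomial peak height'' premise is false for Poisson regression, the MLE may not exist at all on that event for logistic regression, and any H\"older split of $\E_{\theta^\star}\{\mathbf{1}_{\mathcal{E}_n^c}\,\pi_{n,S}(\theta_S)^m\}$ reintroduces unconditional moments of precisely the kind you are trying to avoid, so the argument is circular. The paper never faces this term, because it verifies an in-probability version of the global prior condition: attention is restricted to $\mathcal{E}_n$ throughout, and $\prob_{\theta^\star}(\mathcal{E}_n^c)\to 0$ is absorbed into the final probability statements. Finally, note the downstream consequence of correcting your first gap: with the honest per-configuration bound $p^{G|S|}$, your geometric series $\sum_s (e^{G_0}p^{-\beta})^s$ is unavailable unless $G<\beta$, which nothing guarantees; the paper instead invokes the effective-dimension result (Theorem~\ref{thm:dim}) to restrict to $|S|\leq C|S^\star|$ and simply bounds the prior average by $e^{GC|S^\star|\log p}=e^{GCn\eps_n^2}$, which is what the global prior condition requires with $\eps_n^2(\theta^\star)=n^{-1}|S(\theta^\star)|\log p$.
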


\begin{proof}
The empirical prior density $\pi_{n,S}$ is given by 
\[ \pi_{n,S}(\theta_S) = (2\pi)^{-|S|/2} |\gamma^{-1} J_n(S,\hat\theta_S)|^{1/2} \exp\bigl\{ -\tfrac{1}{2\gamma} (\theta_S - \hat\theta_S)^\top J_n(S,\hat\theta_S) (\theta_S - \hat\theta_S) \bigr\}. \]
Lemma~\ref{lem:mle} establishes the MLE bounds
\begin{equation}
\label{eq:mle.event}
\|\hat\theta_S - \theta_S^\dagger\|^2 \lesssim n^{-1} |S| \Lambda_{|S|} \log p, \quad \text{uniformly in $S$ with $|S| \lesssim |S^\star|$}, 
\end{equation}
and, therefore, with probability tending to 1,
\[ (1 - \zeta_n) J_n(S,\theta_S^\dagger) \leq J_n(S,\hat\theta_S) \leq (1 + \zeta_n) J_n(S,\theta_S^\dagger). \]
Let ${\cal E}_n$ denote the event in \eqref{eq:mle.event}; since $\prob_{\theta^\star}({\cal E}_n) \to 1$, we will restrict attention to cases where ${\cal E}_n$ holds in what follows.  Then 
\begin{align*}
\pi_{n,S}(\theta_S) & \leq (2\pi)^{-|S|/2} (1 + \zeta_n)^{|S|/2} |\gamma^{-1} J_n(S,\theta_S^\dagger)|^{1/2} \\
& \qquad \times \exp\bigl\{ -\tfrac{1 - \zeta_n}{2\gamma} (\theta_S - \hat\theta_S)^\top J_n(S,\theta_S^\dagger) (\theta_S - \hat\theta_S) \bigr\}.
\end{align*}
If $\Sigma_S = \gamma (1-\zeta_n)^{-1} J_n(S,\theta_S^\dagger)^{-1}$, then the upper bound can be simplified as 
\[ \pi_{n,S}(\theta_S) \leq \bigl( \tfrac{1+\zeta_n}{1-\zeta_n} \bigr)^{|S|/2} \, (2\pi)^{-|S|/2} |\Sigma_S|^{-1/2} \exp\{-\tfrac12 (\theta_S - \hat\theta_S)^\top \Sigma_S^{-1} (\theta_S - \hat\theta_S)\}. \]
Write $\theta_S - \hat\theta_S = (\theta_S - \theta_S^\dagger) + (\theta_S^\dagger - \hat\theta_S)$ and then expand the quadratic form in the above display to get 
\[ (\theta_S - \hat\theta_S)^\top \Sigma_S^{-1} (\theta_S - \hat\theta_S) \geq (\theta_S - \theta_S^\dagger)^\top \Sigma_S^{-1} (\theta_S - \theta_S^\dagger) + 2(\theta_S - \theta_S^\dagger)^\top \Sigma_S^{-1} (\hat\theta_S - \theta_S^\dagger). \]
Then it is easy to check that 
\[ \pi_{n,S}(\theta_S) \leq \bigl( \tfrac{1+\zeta_n}{1-\zeta_n} \bigr)^{|S|/2} e^{|(\theta_S - \theta_S^\dagger)^\top \Sigma_S^{-1} (\hat\theta_S - \theta_S^\dagger)|} \, \nm_{|S|}(\theta_S \mid \theta_S^\dagger, \Sigma_S). \]
Apply Cauchy--Schwarz to the quadratic form in the exponent above gives 
\[ |(\theta_S - \theta_S^\dagger)^\top \Sigma_S^{-1} (\hat\theta_S - \theta_S^\dagger)| \leq \|\Sigma_S^{-1/2}(\theta_S - \theta_S^\dagger)\|  \, \| \Sigma_S^{-1/2} (\hat\theta_S - \theta_S^\dagger)\|. \]
By Condition~\ref{cond:design} and Lemma~\ref{lem:mle}, the second term above can be bounded as 
\begin{align*}
\| \Sigma_S^{-1/2} (\hat\theta_S - \theta_S^\dagger)\|^2 & \leq \gamma^{-1} n \Lambda_{|S|} \, \|\hat\theta_S - \theta_S^\dagger\|^2 \lesssim \gamma^{-1} \,|S| \, \Lambda_{|S|}^2 \, \log p. 
\end{align*}
By Condition~\ref{cond:prior}(a), this is upper bounded by a constant times $|S|\log p$.  Let $t_S^2 \sim |S|\log p$ denote that upper bound.  Then the empirical prior density is bounded as
\[ \pi_{n,S}(\theta_S) \leq \bigl( \tfrac{1+\zeta_n}{1-\zeta_n} \bigr)^{|S|/2} e^{t_S \|\Sigma_S^{-1/2}(\theta_S - \theta_S^\dagger)\|} \, \nm_{|S|}(\theta_S \mid \theta_S^\dagger, \Sigma_S). \]
This is constant in data $y$, so the expectation in \eqref{eq:gp} can be ignored---and $m$ can be arbitrarily close to 1.  Moreover, the integral in \eqref{eq:gp} over $\theta_S$ can now be upper bounded by the moment generating function of a chi distribution, with $|S|$ degrees of freedom, evaluated at $t_S$.  This moment generating function does not have a convenient closed-form expression---it involves the confluent hypergeometric function---but since $t_S$ is large (proportional to $\log p$) for all $S$, we can apply the standard asymptotic approximation of the chi distribution's moment generating function \citep[][Ch.~13]{abramowitz.stegun.1966} to get 
\[ \int e^{t_S \|\Sigma_S^{-1/2}(\theta_S - \theta_S^\dagger)\|} \, \nm_{|S|}(\theta_S \mid \theta_S^\dagger, \Sigma_S) \, d\theta_S \lesssim e^{G |S| \log p}, \]
for some constant $G > 0$.  Multiplying by $\{(1+\zeta_n)/(1-\zeta_n)\}^{|S|/2}$ does not affect the bound.  Averaging the bound $e^{G |S| \log p}$ over low-complexity $S$'s, with $|S| \lesssim s^\star$, is upper bounded by $e^{G s^\star \log p} = e^{G n \eps_n^2}$, which proves \eqref{eq:gp}. 
\end{proof}

\subsection{Proof of Theorem~\ref{thm:no.super.sets}} 
\label{proof:no.super.sets}

By \eqref{eq:prob.ratio}, the marginal posterior mass function $\pi^n$ satisfies 
\[ \frac{\pi^n(S)}{\pi^n(S^\star)} \lesssim \frac{\pi_n(S)}{\pi_n(S^\star)} (1 + \alpha \gamma)^{-(|S|-|S^\star|)/2} \, \exp[\alpha\{ \ell_n(S,\hat\theta_S) - \ell_n(S^\star, \hat\theta_{S^\star})\}], \]
where the constant baked into ``$\lesssim$'' is 2.  By Lemma~\ref{lem:loglik.diff}, with probability converging to 1, the exponential term is uniformly upper-bounded in $S$ with $S \supset S^\star$ and $|S| \leq s_n$ by $\exp\{\alpha \kappa (\log p) (|S| - |S^\star|)\}$. Then the prior mass ratio satisfies 
\[ \frac{\pi_n(S)}{\pi_n(S^\star)} \lesssim \frac{{p \choose |S^\star|}}{{p \choose |S|}} \, p^{-a(|S|-|S^\star|)}, \]
so summing the (limiting) upper bound over all $S \supset S^\star$ gives 
\begin{align*}
\sum_{S: S \supset S^\star} \pi^n(S) & \lesssim \sum_{S: S \supset S^\star} \frac{\pi^n(S)}{\pi^n(S^\star)} \\
& = \sum_{s = |S^\star|+1}^{s_n} \frac{{p \choose |S^\star|} {p - |S^\star| \choose p-s}}{{p \choose s}} \{(1 + \alpha \gamma)^{-1/2}\}^{s-|S^\star|} p^{-(\beta - \alpha \kappa) (s - |S^\star|)} \\
& \leq \sum_{s = |S^\star|+1}^{s_n} s^{s-|S^\star|} p^{-(\beta - \alpha \kappa) (s - |S^\star|)} \\
& \leq \sum_{s = |S^\star|+1}^{s_n} (s_n p^{-(\beta - \alpha \kappa)})^{s - |S^\star|}. 
\end{align*}
Since $s_n < p^{\beta-\alpha\kappa}$ by Condition~\ref{cond:prior}, the dominating series converges and, therefore, the tail of that same series must form a divergent sequence as $|S^\star| \to \infty$, which proves the claim.

\subsection{Proof of Theorem~\ref{thm:dim}}

Those $S$ with $|S| > C |S^\star|$ that are proper supersets of $S^\star$ have already been covered in the proof of Theorem~\ref{thm:no.super.sets} above.  So it suffices to consider $S$ that are large but exclude some important variables.  Define the mapping $S \to S^+ = S \cup S^\star$.  The only part of the posterior $\pi^n$ that depends on $S$ itself---not just on $|S|$---is the likelihood component, and the likelihood is increasing in complexity, i.e.,  
\[ L_n(S,\hat\theta_S) \leq L_n(S^+, \hat\theta_{S^+}). \]
So, if $\S = \{S: C|S^\star| < |S| \leq s_n \text{ and } S \not\supset S^\star\}$, then we can proceed as follows:
\begin{align*}
\sum_{S \in \S} \pi^n(S) & \lesssim \sum_{S \in \S} \frac{\pi^n(S)}{\pi^n(S^\star)} \\
& \leq \sum_{S \in \S} \frac{\pi_n(S)}{\pi_n(S^\star)} (1 + \alpha \gamma)^{-(|S|-|S^\star|)/2} \, e^{\alpha\{ \ell_n(S^+,\hat\theta_{S^+}) - \ell_n(S^\star, \hat\theta_{S^\star})\}} \\
& \leq p^{\alpha \kappa |S^\star|} \sum_{s > C|S^\star|}  s_n^{s-|S^\star|} p^{-(\beta-\alpha K)(s - |S^\star|)} \\
& = p^{\alpha \kappa C|S^\star|} (s_n p^{-a})^{(C-1)|S^\star|} \times O(1).
\end{align*}
Since $s_n \ll p^\beta$ and $\beta > \alpha \kappa C / (C - 1)$ by definition of $C$, the upper bound vanishes, proving the claim.

\subsection{Proof of Theorem~\ref{thm:consistent}}

Take any fixed $\theta^\star$ that meets the stated conditions, and set $S^\star = S(\theta^\star)$ and $s^\star = |S^\star|$.  Define $S \in \S := \{S: |S| \leq C s^\star \text{ and } S \not\supseteq S^\star\}$, where $C > 1$ is as in the statement of Theorem~\ref{thm:dim}.  The key point is that there is at least one important variable omitted in the models $S \in \S$.  Let $\rho_n^2 = \rho_n^2(\theta^\star) = n^{-1} v \log p$ denote the lower bound on $\theta_j^{\star 2}$ for $j \in S^\star$, as defined in \eqref{eq:beta.min}, where $v = v(s^\star) = c s^\star \Lambda_{cs^\star}$.   In their Supplementary Materials, \cite{narisetty2018skinny} showed that, with probability tending to 1,\footnote{The result that they {\em stated}, i.e., $\ell_n(S,\hat\theta_S) - \ell_n(S^\star, \hat\theta_{S^\star}) \lesssim -n \rho_n^2$, is incorrect---the difference should depend on how close $S$ is to $S^\star$.  But the result that they {\em proved} is the one stated here.} 
\[ \ell_n(S,\hat\theta_S) - \ell_n(S^\star, \hat\theta_{S^\star}) \leq  -F |S \setminus S^\star| n \rho_n^2, \quad \text{uniformly over $S \in \S$}, \]
for a constant $F > 0$.  Then we get the bound 
\begin{align*}
\sum_{S \in \S} \pi^n(S) & \lesssim \sum_{S \in \S} \frac{\pi^n(S)}{\pi^n(S^\star)} \\
& \leq \sum_{S \in \S} \frac{\pi_n(S)}{\pi_n(S^\star)} (1 + \alpha \gamma)^{-(|S|-|S^\star|)/2} \, e^{\alpha\{ \ell_n(S,\hat\theta_S) - \ell_n(S^\star, \hat\theta_{S^\star})\}} \\
& \leq \sum_{S \in \S} \frac{{p\choose s^\star}}{{p\choose |S|}}  (1+\alpha\gamma)^{-(|S|-s^\star)/2} p^{-\beta(|S|-s^\star) - \alpha F v (|S|-s^\star)},
\end{align*}
where we used the fact that $|S \setminus S^\star| \geq |S| - s^\star$.  Since the summands only depend on $|S|$, the sum over $S \in \S$ can be simplified by first choosing the overall size of the model, then choosing the size of $S \cap S^\star$.  That is, 
\[ \sum_{S \in \S} (\cdots) = \sum_{s=0}^{Cs^\star} \sum_{t=0}^{s \wedge (s^\star-1)} {s^\star\choose t} {p-s^\star\choose s-t} \, (\cdots), \]
where $t$ indexes the size of $S \cap S^\star$.  Note that $t$ can be at most $s^\star-1$ since $S$ is not allowed to be a superset of $S^\star$. Plugging in the expression for $(\cdots)$ and using the bound 
\[ \frac{{s^\star \choose t} {p-s^\star\choose s-t}{p\choose s^\star}}{{p\choose s}} \leq s^{s-t} p^{s^\star - t}, \]
we get 
\[ \sum_{S \in \S} \pi^n(S) \leq \sum_{s=0}^{Cs^\star} \sum_{t=0}^{s \wedge (s^\star-1)} (\phi p^{-\beta})^{s - s^\star}  s^{s-t} (p^{1 - \alpha J v})^{s^\star - t}, \]
where $\phi = (1 + \alpha\gamma)^{-1/2}$. Split the outer sum on the right-hand side above into two pieces: $s \leq s^\star - 1$ and $s \geq s^\star$.  For the first sum, 
\begin{align*}
\sum_{s=0}^{s^\star-1} \sum_{t=0}^s (\phi p^{-\beta})^{s - s^\star} s^{s-t} (p^{1 - \alpha F v})^{s^\star - t} & = \sum_{s=0}^{s^\star-1} \Bigl( \frac{p^\beta}{s\phi} \Bigr)^{s^\star-s} \sum_{t=0}^s (s p^{1-\alpha F v})^{s^\star-t} \\
& \lesssim \sum_{s=0}^{s^\star - 1} (\phi^{-1} p^{1 + \beta - \alpha F v})^{s^\star-s} \\ 
& \lesssim \phi^{-1} p^{1 + \beta - \alpha F v}.
\end{align*}
We have that $\alpha F v > 1 + \beta$ because $v=v(s^\star)$ is or can be made large: if $s^\star \to \infty$ then $v \to \infty$ or, otherwise, the constant $c > 1$ baked into $v$ can be chosen sufficiently large.  Since $\phi^{-1}$ is linear in $\gamma$, which is at most polynomial in $n$, the negative power of $p$ dominates so the bound is $o(1)$. Similarly, for the second sum
\begin{align*}
\sum_{s=s^\star}^{Cs^\star} \sum_{t=0}^{s^\star-1} (\phi p^{-\beta})^{s - s^\star} s^{s-t} (p^{1 - \alpha F v})^{s^\star - t} & = \sum_{s=s^\star}^{Cs^\star} \Bigl( \frac{p^\beta}{s\phi} \Bigr)^{s^\star-s} \sum_{t=0}^{s^\star-1} (s p^{1-\alpha F v})^{s^\star-t} \\
& \lesssim s^\star p^{1 - \alpha F v} \sum_{s=s^\star}^{Cs^\star} \Bigl( \frac{s^\star \phi}{p^\beta} \Bigr)^{s-s^\star} \\
& \lesssim s^\star p^{1 - \alpha F v},
\end{align*}
where the last ``$\lesssim$'' follows because $p^\beta \gg s^\star$ and $\phi < 1$.  By the same argument as given for the first summation, the remaining term is $o(1)$, so we can conclude that $\sum_{S \in \S} \pi^n(S) \to 0$ with probability tending to 1, which proves the claim.

\subsection{Proof of Theorem~\ref{thm:bvm}}

To start, write the posterior distribution $\Pi^n$ as 
\[ \Pi^n(d\theta) = \sum_S \pi^n(S) \, \Pi_S^n(d\theta_S) \otimes \delta_{0_{S^c}}(d\theta_{S^c}), \]
where $\Pi_S^n$ is the conditional posterior of $\theta_S$, given $S$, having a density with respect to Lebesgue measure on the corresponding $|S|$-dimensional Euclidean space.  Then the total variation distance between $\Pi^n$ and the Gaussian approximation $\Psi^n$ in \eqref{eq:gaussian.limit} is 
\begin{align*}
d_\text{\sc tv}\bigl( \Pi^n, \Psi^n \bigr) & \leq \sum_S \pi^n(S) \, d_\text{\sc tv}\bigl( \Pi_S^n \otimes \delta_{0_{S^c}}, \, \Psi^n \bigr) \\
& = \pi^n(S^\star) \, d_\text{\sc tv}\bigl( \Pi_{S^\star}^n \otimes \delta_{0_{S^{\star c}}}, \, \Psi^n \bigr) + \sum_{S \neq S^\star} \pi^n(S) \, d_\text{\sc tv}\bigl( \Pi_S^n \otimes \delta_{0_{S^c}}, \, \Psi^n \bigr).
\end{align*}
Since the total variation distance is bounded by 2 and $\sum_{S \neq S^\star} \pi^n(S) \to 0$ in $\prob_{\theta^\star}$-probability by Theorem~\ref{thm:consistent}, the latter term is $o_{\prob_{\theta^\star}}(1)$.  It remains to show the same for $d_\text{\sc tv}\bigl( \Pi_{S^\star}^n \otimes \delta_{0_{S^{\star c}}}, \, \Psi^n \bigr)$.  Since both distributions concentrate on the same $S^\star$ configuration, we can drop the ``$\otimes$'' terms both above and in \eqref{eq:gaussian.limit}.  That is, the goal is simply to bound 
\[ d_\text{\sc tv}\bigl\{ \Pi_{S^\star}^n , \,  \nm_{|S^\star|}\bigl( \hat\theta_{S^\star}, \varrho \, J_n(S^\star, \hat\theta_{S^\star})^{-1} \bigr)\bigr\}. \]
The posterior $\Pi_{S^\star}^n$ has a density $\pi_{S^\star}^n$ with respect to Lebesgue measure; similarly, the normal distribution in the above display has a density with respect to Lebesgue measure, which we will denote by $\psi_{S^\star}^n$.  Following the argument used to establish the Laplace approximation result in Lemma~\ref{lem:laplace}, it suffices to focus our attention here on integrals over $\theta_S$ in a sufficiently small neighborhood of the MLE $\hat\theta_S$. Once localized, using the form of the proposed empirical prior, we can apply the continuity property in Lemma~\ref{lem:Jn} to get (locally) pointwise lower and upper bounds, respectively, on the density $\pi_{S^\star}^n$, i.e., 
\[ \underbrace{\frac{L_n^\alpha(S^\star, \hat\theta_{S^\star})}{m_n(S^\star) \, \{1 + \alpha\gamma(1 \pm \zeta_n)\}^{|S^\star|/2}}}_{=G_n^\pm} \, \underbrace{\nm\bigl(\theta_{S^\star} \mid \hat\theta_{S^\star}, \, \tfrac{\gamma}{1 + \alpha\gamma(1 \pm \zeta_n)} J_n(S^\star, \hat\theta_{S^\star})^{-1} \bigr)}_{= g_n^\pm(\theta_{S^\star})},  \]
where $m_n(S^\star)$ denotes the marginal likelihood under configuration $S^\star$ and $\zeta_n = \zeta_{n,S^\star}$ is the vanishing sequence identified in Lemma~\ref{lem:Jn}; more on $\zeta_n$ below.  Denote the lower and upper bounds as $\underline\pi_{S^\star}^n$ and $\overline\pi_{S^\star}^n$, respectively.  If we can show that both the lower and upper bounds have vanishing $L_1$-distance to the Gaussian approximation, then we are done. And since both of the bounds have the same form, we will focus here on the upper bound $\overline\pi_{S^\star}^n$, the one where ``$\pm = -$.'' A simple triangle inequality-type argument gives the bound 
\[ | \overline\pi_{S^\star}^n(\cdot) - \psi_{S^\star}^n(\cdot) | \leq G_n^- | g_n^-(\cdot) - \psi_{S^\star}^n(\cdot) | + |G_n^- - 1| \psi_{S^\star}^n(\cdot), \]
and, consequently, 
\[ d_\text{\sc tv}( \overline\pi_{S^\star}^n, \psi_{S^\star}^n ) \leq G_n^- \, d_\text{\sc tv}(g_n^-, \psi_{S^\star}^n) + |G_n^- - 1|. \]
It follows from Lemma~\ref{lem:laplace} that the $G_n^- \to 1$ in $\prob_{\theta^\star}$-probability as $n \to \infty$.  So it remains to bound the total variation distance in the above display, which is between two Gaussians with a common mean and very similar variances. But a special case of the general result in Theorem~1.8 of \citet{arbas.etal.2023}---see, also, \citet{devroye.etal.2023}---gives that 
\[ d_\text{\sc tv}(g_n^-, \psi_{S^\star}^n) \lesssim \Bigl| \varrho \div \frac{\gamma}{1 + \alpha\gamma(1 - \zeta_n)} - 1 \Bigr| \, |S^\star|^{1/2}. \]
It is easy to check that the upper bound is of the order $\zeta_n |S^\star|^{1/2}$, so we need to consider how quickly $\zeta_n$ is vanishing.  Recall that, following the statement of Lemma~\ref{lem:Jn}, it was noted that, for a particular configuration, in this case $S=S^\star$, the choice of $\zeta_n$ was of the order $\{ n^{-1} |S^\star|^2 \Lambda_{|S^\star|} \log p\}^{1/2}$.  That means 
\[ d_\text{\sc tv}(g_n^-, \psi_{S^\star}^n) \lesssim \{ n^{-1} |S^\star|^3 \Lambda_{|S^\star|} \log p \}^{1/2}. \]
By the additional configuration size condition \eqref{eq:small.model}, the upper bound above is vanishing.  This proves that $d_\text{\sc tv}\bigl( \Pi^n, \Psi^n \bigr) \to 0$ in $\prob_{\theta^\star}$-probability and, since it is also uniformly bounded, the theorem's statement (in term of expectations) follows from the dominated convergence theorem. 

\end{appendix}

%%%%%%%%%%%%%%%%%%%%%%%%%%%%%%%%%%%%%%%%%%%%%%
%% Support information, if any,             %%
%% should be provided in the                %%
%% Acknowledgements section.                %%
%%%%%%%%%%%%%%%%%%%%%%%%%%%%%%%%%%%%%%%%%%%%%%
\begin{acks}[Acknowledgments]
The authors thank the Editor and anonymous Associate Editor and reviewers for their helpful feedback on a previous version of the manuscript. 
\end{acks}

%%%%%%%%%%%%%%%%%%%%%%%%%%%%%%%%%%%%%%%%%%%%%%
%% Funding information, if any,             %%
%% should be provided in the                %%
%% funding section.                         %%
%%%%%%%%%%%%%%%%%%%%%%%%%%%%%%%%%%%%%%%%%%%%%%
\begin{funding}
This work was partially supported by the U.~S.~National Science Foundation, under grants DMS--1737933, DMS--1811802, and SES--205122. 
\end{funding}

\bibliographystyle{imsart-nameyear} % Style BST file (imsart-number.bst or imsart-nameyear.bst)
\bibliography{mybib.bib}       % Bibliography file (usually '*.bib')

\end{document}